\newtheorem{theorem}{Theorem}[section]
\newtheorem{lemma}[theorem]{Lemma}
\newtheorem{corollary}[theorem]{Corollary}
\newtheorem{proof}{\textmd{\textit{Proof.}}}
\newtheorem{remark}[theorem]{Remark}
\newtheorem{example}[theorem]{Example}
\newtheorem{definition}[theorem]{Definition}
\newcommand{\qedd}{\hfill \Box}
\newcommand{\ve}{\varepsilon}
\newcommand{\lra}{\longrightarrow}
\newcommand{\ora}{\overrightarrow}
\newcommand{\ola}{\overleftarrow}
\newcommand{\wt}{\widetilde}
\newcommand{\wh}{\widehat}
\newcommand{\ol}{\overline}
\newcommand{\N}{\ensuremath{\mathbb{N}}}
\newcommand{\R}{\ensuremath{\mathbb{R}}}
\newcommand{\bH}{\ensuremath{\mathbb{H}}}
\newcommand{\Sph}{\ensuremath{\mathbb{S}}}
\newcommand{\cG}{\ensuremath{\mathcal{G}}}
\newcommand{\cN}{\ensuremath{\mathcal{N}}}
\newcommand{\cT}{\ensuremath{\mathcal{T}}}
\newcommand{\cO}{\ensuremath{\mathcal{O}}}
\newcommand{\cR}{\ensuremath{\mathcal{R}}}
\def\diam{\mathop{\mathrm{diam}}\nolimits}
\def\rad{\mathop{\mathrm{rad}}\nolimits}
\def\Cut{\mathop{\mathrm{Cut}}\nolimits}
\title{Grove\,-\,Shiohama type sphere theorem\\ in Finsler geometry\footnote{
2010 Mathematics Subject Classification: Primary 53C60; Secondary 53C21, 53C22.}
\footnote{Key words and phrases:
Finsler geometry, flag curvature, diameter sphere theorem}}
\author{Kei KONDO}
\date{\today}
\begin{document}
\maketitle

\begin{abstract}
From radial curvature geometry's standpoint, 
we prove a sphere theorem of the Grove-Shiohama type for a certain class of 
compact Finsler manifolds.
\end{abstract}

\section{Introduction}\label{sec1}

Beyond a doubt, one of the most beautiful theorems in global Riemannian geometry is 
the diameter sphere theorem of Grove and Shiohama \cite{GS}. 
In their proof, Toponogov's comparison theorem (TCT) was very first applied {\bf seriously} together with the critical point theory, introduced by themselves, of distance functions. That is, if a point $x$ in a complete Riemannian manifold is a critical point of the distance function $d_p$ to a point $p$ in the manifold, then $x$ is the cut point of $p$, and hence $d_p$ is not differentiable at $x$. 
However, they overcame the analytical obstruction by applying the original TCT to the triangle $\triangle (pxy)$ 
with the interior angle $\angle (pxy) \le \pi/2$. That's the point, i.e., 
they took the manifold into their hands by directly drawing segments on it.\par 
Our purpose of this article is {\em to prove a sphere theorem of the Grove-Shiohama type 
for a certain class of forward complete 
Finsler manifolds whose radial flag curvatures are bounded below by $1$}. 
Of course, our major tools to prove it are a TCT for such a class and the critical point theory, more precisely, Gromov's isotopy lemma (\cite{G}). Such a TCT is easily proved by modifying the TCT established in \cite{KOT1} (see Section \ref{sec2} in this article), 
and the isotopy lemma holds from a similar argument to the Riemannian case. 
The fact that, compared with the Riemannian case, 
there are {\bf few} theorems on the relationship between the topology and the curvature of a Finsler manifold is the {\em worthy of note}. E.g., Shen's finiteness theorem (\cite{Shvol}), 
Rademacher's quarter pinched sphere theorem (\cite{Ra}), 
the Gauss-Bonnet formula for surfaces with 
non-constant indicatrix of Itoh, Sabau, and Shimada (\cite{ISS}), 
Ohta's splitting theorem (\cite{Ospl}), and the finiteness of topological type and 
a diffeomorphism theorem to Euclidean spaces of the author with Ohta and Tanaka 
in \cite{KOT2}.

\medskip

To state our sphere theorem of the Grove-Shiohama type in Finsler case, 
we will introduce several notions in the geometry and radial curvature geometry: 
Let $(M, F, p)$ denote a pair of a forward complete, connected,
$n$-dimensional $C^\infty$-Finsler manifold $(M,F)$ with a base point $p \in M$,
and $d: M \times M \lra [0, \infty)$ denote the distance function induced from $F$.
Remark that the {\em reversibility} $F(-v)=F(v)$ is not assumed in general,
and hence $d(x, y) \not= d(y,x)$ is allowed.

For a local coordinate $(x^i)^{n}_{i=1}$ of an open subset $\cO \subset M$,
let $(x^i, v^j)_{i,j=1}^{n}$ be the coordinate of the tangent bundle $T\cO$ over $\cO$ such that 
\[ v:= \sum_{j = 1}^{n} v^j \frac{\partial}{\partial x^j}\Big|_{x},
 \qquad \ x \in \cO. \]
For each $v \in T_xM \setminus \{0\}$, the positive-definite $n \times n$ matrix 
\[
\big( g_{ij} (v) \big)_{i,j= 1}^{n}:= 
\left(
\frac{1}{2} \frac{\partial^2 (F^2)}{\partial v^i \partial v^j}(v)
\right)_{i, j = 1}^{n}
\]
provides us the Riemannian structure $g_{v}$ of $T_x M$ by 
\[
g_{v} \left(
\sum_{i = 1}^{n} a^i \frac{\partial}{\partial x^i}\bigg|_{x}, 
\sum_{j = 1}^{n} b^j \frac{\partial}{\partial x^j}\bigg|_{x}
\right) 
:=
\sum_{i,j = 1}^{n} g_{ij} (v) a^ib^j. 
\]
This is a Riemannian approximation of $F$ in the direction $v$.
For two linearly independent vectors $v, w \in T_{x} M \setminus \{0\}$,
the {\em flag curvature} is defined by 
\[
K_M (v, w) := \frac{g_{v} (R^{v} (w, v)v, w)}{g_{v} (v, v) g_{v} (w, w) - g_{v} (v, w)^2},
\]
where $R^{v}$ denotes the curvature tensor induced from the Chern connection.
Remark that $K_M (v, w)$ depends on the \emph{flag} $\{sv + tw\,|\, s, t \in \R\}$, and also on the \emph{flag pole} $\{sv\,|\, s> 0\}$.

Given $v,w \in T_xM \setminus \{0\}$, define the \emph{tangent curvature} by 
\[
\cT_M(v, w) := g_X\big( D^Y_Y Y(x) - D^X_Y Y(x), X(x) \big), 
\]
where the vector fields $X,Y$ are extensions of $v,w$,
and $D_{v}^{w}X(x)$ denotes the covariant derivative of $X$ by $v$ with reference vector $w$. 
Independence of $\cT_M(v,w)$ from the choices of $X,Y$ is easily checked.
Note that $\cT_M \equiv 0$ if and only if $M$ is of \emph{Berwald type}
(see \cite[Propositions~7.2.2, 10.1.1]{Sh}).
In Berwald spaces, for any $x,y \in M$, 
the tangent spaces $(T_xM, F|_{T_xM})$ and $(T_yM, F|_{T_yM})$
are mutually linearly isometric (cf.~\cite[Chapter~10]{BCS}).
In this sense, $\cT_M$ measures the variety of tangent Minkowski normed spaces.

\medskip

Let $\wt{M}$ be a complete $2$-dimensional Riemannian manifold,
which is homeomorphic to $\R^{2}$ if $\wt{M}$ is non-compact,
or to $\Sph^{2}$ if $\wt{M}$ is compact. 
Fix a base point $\tilde{p} \in \wt{M}$.
Then, we call the pair $(\wt{M}, \tilde{p})$ a {\em model surface of revolution} 
if its Riemannian metric $d\tilde{s}^2$ is expressed 
in terms of the geodesic polar coordinate around $\tilde{p}$ as 
\[
d\tilde{s}^2 = dt^2 + f(t)^2d \theta^2, \qquad 
(t,\theta) \in (0,a) \times \Sph_{\tilde{p}}^1, 
\]
where $0<a\le \infty$, $f : (0, a) \lra \R$ denotes a positive smooth function 
which is extensible to a smooth odd function around $0$, 
and $\Sph^{1}_{\tilde{p}} := \{ v \in T_{\tilde{p}} \wt{M} \,|\, \| v \| = 1 \}$. 
Define the {\em radial curvature function} $G: [0,a) \lra \R$
such that $G(t)$ is the Gaussian curvature at $\wt{\gamma}(t)$,
where $\wt{\gamma}:[0,a) \lra \wt{M}$ is any (unit speed) meridian
emanating from $\tilde{p}$.
Note that $f$ satisfies the differential equation 
$f''+Gf=0$ with initial conditions $f(0) = 0$ and $f'(0) = 1$. 
It is clear that, if $f (t) = t, \sin t, \sinh t$, then $\wt{M} = \R^2, \Sph^2, \bH^2(-1)$, respectively. 
We call $(\wt{M}, \tilde{p})$ a {\em von Mangoldt surface}
if $G$ is non-increasing on $[0,a)$. 
A round sphere is the only compact, `smooth' von Mangoldt surface, 
i.e., $f$ satisfies $\lim_{t \uparrow a}f'(t)=-1$. If a von Mangoldt surface has the property $a <\infty$ and 
if it is not a round sphere, then $\lim_{t \uparrow a} f(t)=0$ and $\lim_{t \uparrow a} f'(t)>-1$.
Therefore, such a surface $(\wt{M}, \tilde{p})$ has a singular point, say $\tilde{q} \in \wt{M}$,
at the maximal distance from $\tilde{p} \in \wt{M}$ such that $d(\tilde{p},\tilde{q})= a$, and hence 
$\wt{M}$ is an Alexandrov space. Its shape can be understood as a `balloon'.
\begin{example}{\rm (\cite[Example 1.2]{K})}
Set $f (t) := \frac{t(1 - t)(1 + t)}{11t^{4} - 25t^{2} + 18}$.
Then, the compact surface of revolution 
$(\wt{M}, \tilde{p})$ with $d\tilde{s}^2 = dt^2 +  f(t)^2d \theta^2$ is of von Mangoldt type and 
has a singular point at $t = 1$. In particular, $-\infty < \lim_{t \uparrow 1} G (t) < 0$. 
\end{example}

Paraboloids and $2$-sheeted hyperboloids are typical examples of non-compact 
von Mangoldt surfaces.
An atypical example of such a surface is as follows.

\begin{example}{\rm (\cite[Example 1.2]{KT1})}
Set $f (t) := e^{- t^{2}} \tanh t$ on $[0, \infty)$.
Then, the non-compact surface of revolution 
$(\wt{M}, \tilde{p})$ with $d\tilde{s}^2 = dt^2 +  f(t)^2d \theta^2$ is of von Mangoldt type, 
and $G$ changes the sign.
Indeed, $\lim_{t \downarrow 0}G (t) = 8$ and $\lim_{t \to \infty}G (t) = - \infty$.
\end{example}

We say that a Finsler manifold $(M, F, p)$ has the
{\em radial flag curvature bounded below by that of a model surface of revolution 
$(\wt{M}, \tilde{p})$} if, 
along every unit speed minimal geodesic $\gamma: [0,l) \lra M$ 
emanating from $p$, we have
\[
K_{M} \big(\dot{\gamma}(t), w \big) \ge G (t)
\]
for all $t \in [0, l)$ and $w \in T_{\gamma(t)}M$ linearly independent to $\dot{\gamma}(t)$. 
Also, we say that $(M, F, p)$ has the {\em radial tangent curvature bounded below 
by a constant $\delta \in (-\infty, 0]$}
if, along every unit speed minimal geodesic $\gamma: [0,l) \lra M$ emanating from $p$, 
\[
\cT_M(\dot{\gamma}(t), w) \ge \delta
\]
for all $w \in T_{\gamma(t)}M$.

\medskip

Our main result is now stated: 

\begin{theorem}\label{2013_02_12_thm}
Let $(M, F, p)$ be a compact connected $n$-dimensional $C^{\infty}$-Finsler manifold 
whose radial flag curvature is bounded below by $1$ and radial tangent curvature is equal to $0$. Assume that 
\begin{enumerate}[{\rm (1)}]
\item 
$F(w)^2 \ge g_v(w,w)$ 
for all $x \in B^+_{\frac{\pi}{2}} (p)$, $v \in \cG_p(x)$, and $w \in T_xM$, 
\item 
$g_v(w, w) \ge F(w)^2$ for all $x \in M \setminus \ol{B_{\frac{\pi}{2}}^+ (p)}$,
$v \in \cG_p (x)$ and $w  \in T_xM$, 
\item
the reverse curve $\bar{c}(s):=c(a-s)$ of $c$ is geodesic and $L_{\rm m} (c) \le \rad_p$ 
for all minimal geodesic segments $c:[0,a] \lra M \setminus \{p\}$. 
\end{enumerate}
If $\rad_p> \pi / 2$, then $M$ is homeomorphic to the sphere $\Sph^n$. 
\end{theorem}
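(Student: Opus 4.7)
The plan is to adapt Grove and Shiohama's critical-point-theoretic argument to the Finsler setting, using the Toponogov comparison theorem (TCT) from Section~\ref{sec2} as the main geometric input. By compactness, choose $q \in M$ with $d(p,q) = \rad_p > \pi/2$. I will show that the forward distance function $d_p := d(p,\cdot)$ has no critical points in $M \setminus \{p, q\}$ in the Grove--Shiohama sense, i.e., every $x \in M \setminus \{p, q\}$ admits $w \in T_xM$ with $g_v(v, w) > 0$ for every $v \in \cG_p(x)$. Once this is done, the standard isotopy lemma from Finsler critical point theory (cf.~\cite{KOT2}) produces a gradient-like flow on $M \setminus \{p, q\}$ and hence a homeomorphism $M \approx \Sph^n$ via the usual two-disk decomposition.

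For the regularity claim, fix $x \in M \setminus \{p, q\}$ and pick a minimal geodesic $\tau$ from $p$ to $x$ together with a minimal geodesic $\sigma$ from $x$ to $q$. I propose $w := \dot\sigma(0)$ as the witness, with $v := \dot\tau(d(p,x)) \in \cG_p(x)$. The Finsler first variation formula identifies $g_v(v, w) > 0$ with the condition that the inward hinge angle at $x$ in the triangle $p, x, q$ (measured via $g_v$) exceed $\pi/2$. Using condition~(3), the minimal $\sigma \subset M \setminus \{p\}$ and the reverse of $\tau$ (a geodesic from $x$ to $p$, obtained from condition~(3) directly or as a limit of reverse curves of minimals $y \to x$ as $y \to p$) form together with $\sigma$ a bona fide geodesic hinge at $x$ with side $F$-lengths all bounded above by $\pi$, so the TCT of Section~\ref{sec2} applies with model sphere $\Sph^2$. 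Conditions~(1) and~(2) enter here to control the translation between $F$ and the Riemannian approximation $g_v$ on the two regions $B^+_{\pi/2}(p)$ and $M \setminus \ol{B^+_{\pi/2}(p)}$ respectively, where the TCT handles the hinge in different manners. The conclusion is the hinge-angle inequality $\angle_x(-v, w) \geq \tilde\alpha$, where $\tilde\alpha$ is the angle at the corresponding vertex of the $\Sph^2$-comparison triangle with side lengths $d(p,x), d(x,q), d(p,q)$.

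The remaining task is the spherical computation $\tilde\alpha > \pi/2$, which by the spherical law of cosines reads
\[
\cos d(p,q) \;<\; \cos d(p,x)\, \cos d(x,q).
\]
The left side is negative since $d(p,q) > \pi/2$. I would distinguish cases on whether each of $d(p,x)$, $d(x,q)$ is $\leq \pi/2$ or $> \pi/2$: when both right-hand cosines are non-negative the inequality is immediate, when both are negative the product is positive, and in the mixed cases the bound $d(p,x) \leq d(p,q)$ (since $q$ maximises $d_p$) together with $d(x,q) < \pi$ (from condition~(3) applied to $\sigma \subset M \setminus \{p\}$) force the right-hand side strictly above the left. Hence $\tilde\alpha > \pi/2$, and by the TCT the Finsler hinge angle at $x$ exceeds $\pi/2$, which is equivalent to $g_v(v, w) > 0$, establishing regularity.

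The chief obstacle is the middle step: verifying that the Finsler TCT combined with conditions~(1)--(3) really does transfer the hinge-angle comparison to the asymmetric setting without hidden constants or lost directions, particularly when the triangle spans the boundary $\partial B^+_{\pi/2}(p)$ where the regimes of conditions~(1) and~(2) meet and the reference vector $v \in \cG_p(x)$ must be chosen with care. The spherical calculation is short and essentially elementary, and the concluding handle decomposition from critical point theory is a routine Finsler adaptation of the original Riemannian argument of Grove and Shiohama.
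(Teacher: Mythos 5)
The paper does not prove the theorem by the classical ``no critical points for $d_p$ other than $p$ and $q$'' route that you sketch. It instead produces a two-disk decomposition of $M$ across the sphere $\partial B^+_{\pi/2}(p)$: Lemmas~\ref{2013_02_17_lem3.1}--\ref{2013_02_17_lem3.3} show that $M\setminus B^+_{\pi/2}(p)$ is convex with a unique farthest point and hence a topological disk, while Lemma~\ref{2013_02_17_lem3.5} establishes the absence of critical points in $\ol{B^+_{\pi/2}(p)}\setminus\{p\}$, giving the second disk. The decomposition across $\partial B^+_{\pi/2}(p)$ is precisely what makes the two halves of the TCT (Corollary~\ref{2013_02_13_cor2.1} inside, Lemma~\ref{2013_02_13_lem2.1} outside) applicable, and the Key Lemma~\ref{2013_02_17_lem3.1} is exactly what guarantees that the relevant geodesics do not straddle the boundary. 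Your proposal, by contrast, applies the TCT to a single global triangle $p,x,q$ whose side from $x$ to $q$ will in general cross $\partial B^+_{\pi/2}(p)$; you acknowledge this obstacle but do not resolve it, and it is the principal technical content of the paper's Section~\ref{sec3}.

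There is a second, more serious gap. Your spherical computation $\tilde\alpha>\pi/2$ is false in general. Take a comparison triangle on $\Sph^2$ with $\tilde d(\tilde p,\tilde q)=\pi/2+0.01$, $\tilde d(\tilde p,\tilde x)=0.1$, $\tilde d(\tilde x,\tilde q)=\pi/2+0.05$; this is an admissible triangle, yet the law of cosines gives $\cos\tilde\alpha\approx 0.4$, so $\tilde\alpha<\pi/2$. The reason the classical Grove--Shiohama argument works despite this is that it exploits criticality at \emph{both} ends: $q$ is a farthest point, hence automatically critical for $d_p$, and the inequality at $\tilde q$ combined with the one at $\tilde x$ (and $\tilde d(\tilde p,\tilde q)>\pi/2$) forces a contradiction. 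Your proposal uses only the angle at $x$, drops the angle condition at $q$, and attempts to conclude by a one-sided case analysis on signs of cosines; that case analysis is incorrect (the mixed case does not close). The paper's Lemma~\ref{2013_02_17_lem3.5} indeed invokes criticality at $q$ in inequality \eqref{2013_02_17_lem3.5-1}, and then finishes not via the law of cosines but by constructing a broken geodesic in $\Sph^2$ and contradicting the structure of the cut locus. So while your overall plan (TCT plus critical point theory plus the isotopy lemma) is in the right spirit, both the hinge that spans the boundary and the angle estimate on the model need the work the paper actually does, and as written your argument would fail at the spherical step.
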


\medskip

In Theorem \ref{2013_02_12_thm}, we set $B_r^+ (p):=\{ x \in M \,|\, d(p,x)<r \}$, 
\begin{equation}\label{G_p}
\mathcal{G}_p (x) := \{ \dot{\gamma}(l) \in T_{x}M \,|\,
 \text{$\gamma$ is a minimal geodesic segment from $p$ to $x$} \},
\end{equation}
where $l:=d(p,x)$, 
$L_{\rm{m}}(c):= \int^a_0 \max\{ F(\dot{c}),F(-\dot{c}) \} \,ds$, and $\rad_p:= \sup_{x \in M}d(p,x)$. 
The assumptions (1) and (2) are the $2$-uniform convexities 
with the sharp constant (see \cite{Ouni}), 
but only for {\bf special points} $x$ and {\bf directions} $v$, respectively. 
The sharpness means that (1) and (2) hold for {\bf all} $(x,v) \in TM \setminus \{0\}$
only if $F$ is Riemannian. 
One may construct non-Riemannian spaces satisfying (1) and (2) (see \cite{KOT1}). 
The geodesic property on $\bar{c}$ in the (3) and $\cT_M(\dot{\gamma}(t), w) = 0$ 
just only imply $g_{\dot{\gamma}} (D^{\dot{\gamma}}_{\dot{c}}\dot{c}, \dot{\gamma}) = 0$. 
Note that $D^{\dot{\gamma}}_{\dot{c}}\dot{c} \not=0$ in general. 
The (3) holds, if $F$ is reversible and $\rad_p = \diam (M):= \sup_{x,\,y \in M} d(x, y)$. 
Note that $\diam (M)\le \pi$ from the Bonnet-Myers theorem (\cite[Theorem 7.7.1]{BCS}). 
If $F$ is of Berwald type, the geodesic property on $\bar{c}$ in the (3) and $\cT_M(\dot{\gamma}(t), w)=0$ are automatically satisfied. In particular, Theorem \ref{2013_02_12_thm} contains 
the diameter sphere theorem as a special case. 

\begin{remark}In the (3) of Theorem \ref{2013_02_12_thm}, 
we can replace $L_{\rm m} (c) \le \rad_p$ with the following weaker assumption:
\[
L_{\rm m} (c)
\begin{cases}
< \pi \quad \text{for $c$ satisfying $c ( [0,a] ) \cap (M \setminus B_{\frac{\pi}{2}}^+ (p)) \not= \emptyset$,}\\[2mm]
\le \rad_p \quad \text{for $c$ emanating from $q \in \partial B_{\rad_p}^+ (p)$ 
to any point in $B_{\frac{\pi}{2}}^+ (p)$.}
\end{cases}
\]
Note that $\partial B_{\rad_p}^+ (p) = \{q\}$ (see Lemma \ref{2013_02_17_lem3.3}).
\end{remark}

\begin{remark} Probably, one can generalize 
Theorem \ref{2013_02_12_thm} to a wider class of metrics than those described in it, 
that is, by employing a von Mangoldt surface of the balloon type 
satisfying $f'(\rho) =0$ for unique $\rho \in (0, a)$, $\lim_{t \uparrow a} f(t)=0$, 
$\lim_{t \uparrow a} f'(t)>-1$, and $\rad_p > \rho$. 
Of course, more assumptions would be demanded to generalize it than 
those in Theorem \ref{2013_02_12_thm}. In the Riemannian case, see \cite[Theorem A]{KO1}.
\end{remark}

\medskip\noindent{\textit{Acknowledgements.}}
I would like to thank Professor M. Tanaka for helpful discussions.

\section{TCTs}\label{sec2}

To prove Theorem \ref{2013_02_12_thm}, we need Toponogov's comparison 
theorems (TCT) in Finsler geometry. In \cite{KOT1}, we recently established a TCT for a certain class of Finsler manifolds whose radial flag curvatures are bounded below by that of a von Mangoldt surface. 
In this section, we modify the TCT in the case where a model surface is the unit sphere. 

\subsection{Angles, triangles, and a counterexample}

Let $(M,F,p)$ be a forward complete, connected $C^\infty$-Finsler manifold with a base point $p \in M$,
and denote by $d$ its distance function. It follows from the Hopf-Rinow 
theorem that the forward completeness guarantees that any 
two points in $M$ can be joined by a minimal geodesic segment. 
Owing to $d(x,y) \neq d(y,x)$ generally, we need a distance with the symmetric property to 
define the `angles': Define 
\[
d_{\rm{m}} (x, y) := \max\{d(x, y), d(y, x)\}.
\]
Since $|d(p,x) -d(p,y)| \le d_{\rm{m}} (x, y)$, 
we may define the angles with respect to $d_{\rm{m}}$ as follows.

\begin{definition}{\bf (Angles)}\label{2013_02_13_def2.1}
Let $c :[0,a] \lra M$ be a unit speed minimal geodesic segment
(i.e., $F(\dot{c}) \equiv 1$) with $p \not\in c([0,a])$. 
The \emph{forward} and the \emph{backward angles}
$\ora{\angle}(pc(s)c(a))$, $\ola{\angle}(pc(s)c(0)) \in [0,\pi]$ at $c(s)$ are defined via
\begin{align*}
\cos \ora{\angle}\big( pc(s)c(a) \big) &:= -\lim_{h \downarrow 0} 
 \frac{d(p,c(s+h)) -d(p, c(s))}{d_{\rm{m}} (c(s), c(s + h))} \quad \text{for $s \in [0,a)$}, \\
\cos \ola{\angle}\big( pc(s)c(0) \big) &:= \lim_{h \downarrow 0} 
 \frac{d(p, c(s)) - d (p, c(s-h))}{d_{\rm{m}} (c(s -h), c(s))} \quad \text{for $s \in (0, a]$}.
\end{align*}
\end{definition}

\begin{remark}\label{2013_02_13_rem2.1}
The limits in Definition \ref{2013_02_13_def2.1} exist in $[-1,1]$ (see \cite[Lemma~2.2]{KOT1}).
\end{remark}

\begin{definition}{\bf (Forward triangles)}\label{2013_02_13_def2.2}
For three distinct points $p, x, y \in M$,
\[ \triangle (\ora{px}, \ora{py}) := (p, x, y; \gamma, \sigma, c) \]
will denote the \emph{forward triangle} consisting of unit speed minimal geodesic segments
$\gamma$ emanating from $p$ to $x$, $\sigma$ from $p$ to $y$, and $c$ from $x$ to $y$.
Then the corresponding \emph{interior angles} $\ora{\angle}x, \ola{\angle}y$
at the vertices $x$, $y$ are defined by 
\[
\ora{\angle}x := \ora{\angle}\big( p c(0) c(a) \big), \qquad
 \ola{\angle}y := \ola{\angle}\big( p c(a) c(0) \big), 
\]
respectively, where $a:=d(x,y)$.
\end{definition}

\begin{definition}{\bf (Comparison triangles)}\label{2013_02_13_def2.3}
Fix a model surface of revolution $(\wt{M}, \tilde{p})$. 
Given a forward triangle $\triangle (\ora{px}, \ora{py})= (p, x, y; \gamma, \sigma, c)\subset M$,
a geodesic triangle $\triangle (\tilde{p}\tilde{x} \tilde{y}) \subset \wt{M}$ is called
its \emph{comparison triangle} if
\[ \tilde{d}(\tilde{p}, \tilde{x}) = d(p, x), \qquad 
\tilde{d}(\tilde{p},\tilde{y}) = d(p, y), \qquad 
\tilde{d}(\tilde{x},\tilde{y}) = L_{\rm{m}}(c) \]
hold, where $L_{\rm{m}}(c)= \int^{d(x,\,y)}_0 \max\{ F(\dot{c}),F(-\dot{c}) \} \,ds$.
\end{definition}

There are many forward triangles admitting their comparison triangles, but 
TCT {\bf does not} always hold 
for all of them: 

\begin{example}(\cite{KO2})\label{2013_02_13_exa2.1} 
For an even number $q$,  let $M$ be $\R^2$ with the $\ell^q$-norm. 
Then, $M$ is Minkowskian. 
Take a forward triangle $\triangle (\ora{px}, \ora{py}) \subset M$, 
where $p :=(0, 0), x :=(1,0), y:=(0,1) \in M$, and let $c (t):= (1-t, t)$ denote the side 
of $\triangle (\ora{px}, \ora{py})$ joining $x$ to $y$. 
Assume that $q$ is sufficiently large. Then, we observe that 
both angles $\ora{\angle} x$ and $\ola{\angle} y$ are nearly $0$, respectively. 
We are able to think of $(\R^2, \tilde{p})$ as a reference surface for $M$, 
because flag curvature $K_M \equiv 0$. It is clear that $\triangle (\ora{px}, \ora{py})$ admits its comparison triangle $\triangle (\tilde{p}\tilde{x} \tilde{y}) \subset \R^2$. 
Since $\triangle (\ora{px}, \ora{py})$ is nearly equilateral, $\triangle (\tilde{p}\tilde{x} \tilde{y})$ is too. 
Hence, $\ora{\angle} x < \angle \tilde{x}$ and $\ola{\angle} y < \angle \tilde{y}$ hold. 
Therefore, TCT does not hold for the $\triangle (\ora{px}, \ora{py})$. 
\end{example}

\subsection{Modified TCTs}

From Example \ref{2013_02_13_exa2.1}, we understand that 
some strong conditions are demanded to establish a TCT in Finsler geometry. 
Taking this into account, we have the following:

\begin{theorem}{\rm (\cite[Theorem 1.2]{KOT1})}\label{2013_02_13_thm2.1}
Assume that $(M, F, p)$ is a forward complete, connected $C^{\infty}$-Finsler manifold
whose radial flag curvature is bounded below by that of a von Mangoldt surface 
$(\wt{M}, \tilde{p})$ satisfying $f'(\rho) =0$ for unique $\rho \in (0, \infty)$. 
Let $\triangle (\ora{px}, \ora{py}) = (p, x, y; \gamma, \sigma, c) \subset M$ be a forward triangle 
satisfying that, for some open neighborhood $\cN(c)$ of $c$,
\begin{enumerate}[$(1)$]
\item
$c([0,d(x,y)]) \subset M \setminus \ol{B^+_{\rho} (p)}$,
\item 
$g_v(w,w) \ge F(w)^2$ for all $z \in \cN(c)$, $v \in \cG_p(z)$ and $w \in T_zM$,
\item
$\cT_{M}(v,w) = 0$ for all $z \in \cN(c)$, $v \in \cG_p(z)$ and $w \in T_zM$,
and the reverse curve $\bar{c}(s):=c(d(x,y)-s)$ of $c$ is also geodesic.
\end{enumerate}
If such $\triangle (\ora{px}, \ora{py})$ admits a comparison triangle $\triangle (\tilde{p}\tilde{x} \tilde{y})
\subset \wt{M}$,
then we have $\ora{\angle} x \ge \angle \tilde{x}$ and $\ola{\angle} y \ge \angle \tilde{y}$.
\end{theorem}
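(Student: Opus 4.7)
The plan is to reduce the angle comparison to a one-dimensional ODE comparison for the radial distance function along the side $c$, following the Riemannian Toponogov strategy while using hypotheses (1)--(3) to neutralize the specifically Finslerian corrections. First I would translate angles to one-sided derivatives of $r(\cdot) := d(p,\cdot)$ along $c$. By Definition \ref{2013_02_13_def2.1} and the first variation formula in the Finsler setting, $\cos \ora{\angle}x$ and $\cos \ola{\angle}y$ are determined by the right and left derivatives of $r \circ c$ at the endpoints, and their model counterparts $\cos \angle \tilde{x}$, $\cos \angle \tilde{y}$ admit analogous expressions on $\wt{M}$. So it suffices to prove the appropriate pointwise comparison between $r \circ c$ and its model analogue.

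Next I would set up a modified distance $\phi \circ r$, where $\phi : [0, a) \to \R$ is chosen so that on the model, $\phi \circ \tilde{r}$ along any unit-speed geodesic obeys a simple second-order linear ODE driven by $G$ (the standard choice makes $(\phi \circ \tilde{r})'' + G(\phi \circ \tilde{r})$ a constant along any geodesic of $\wt{M}$). The radial flag curvature bound $K_M \ge G$, combined with the Rauch-type Jacobi field comparison for the Chern connection along minimal geodesics $\gamma$ from $p$, yields a pointwise upper bound for the Chern-Hessian $\nabla^2(\phi \circ r)$ evaluated via the Riemannian approximation $g_{\dot{\gamma}}$.

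The main step is to transfer this estimate to $c$. Expanding $(\phi \circ r \circ c)''(s)$ by the chain rule, and using the natural reference vector $\dot{\gamma}$ for the Hessian of $r$, produces a Hessian term on $(\dot{c}, \dot{c})$ plus a first-order correction of the form $\phi'(r)\, g_{\dot{\gamma}}(D^{\dot{\gamma}}_{\dot{c}} \dot{c}, \dot{\gamma}) / F(\dot{\gamma})$. Hypothesis (3) --- both $c$ and its reverse being geodesics, together with $\cT_M(\dot{\gamma}, \cdot) = 0$ --- forces $g_{\dot{\gamma}}(D^{\dot{\gamma}}_{\dot{c}} \dot{c}, \dot{\gamma}) = 0$, so this correction drops out. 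Hypothesis (2), the 2-uniform convexity $g_v(w,w) \ge F(w)^2$ along $c$ with $v \in \cG_p$, then upgrades the Hessian bound phrased in $g_{\dot{\gamma}}$ into one in the Finsler quantity $F(\dot{c})^2 = 1$. Hypothesis (1) places $c$ in the region $\{r > \rho\}$ where $f' \le 0$, guaranteeing existence of the comparison triangle and monotonicity of $\phi$ in the direction that makes the ODE comparison run. Integrating the resulting differential inequality $(\phi \circ r \circ c)''(s) \le$ (model expression) against the model ODE, with boundary data matched via the $L_{\rm m}$-normalization built into Definition \ref{2013_02_13_def2.3}, yields $r \circ c \ge \tilde{r} \circ \tilde{c}$ pointwise, and one-sided differentiation at the endpoints gives the desired inequalities $\ora{\angle}x \ge \angle \tilde{x}$ and $\ola{\angle}y \ge \angle \tilde{y}$.

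The principal obstacle is the Hessian transfer in the third paragraph: the reference-vector dependence of the Chern connection means that the Hessian of $r$ is naturally computed with reference vector $\dot{\gamma}$, while the chain rule along $c$ naturally uses $\dot{c}$. Hypotheses (2) and (3) are designed precisely to bridge this gap, and verifying rigorously that they eliminate the asymmetric correction terms --- particularly without assuming the full Berwald condition, where $D^{\dot{\gamma}}_{\dot{c}} \dot{c}$ itself need not vanish --- is the technical heart of the proof.
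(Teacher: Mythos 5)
Your strategy---replacing the distance function by a modified $\phi\circ r$ so that its model analogue solves a one--dimensional ODE along geodesics, establishing a Hessian/differential inequality on $M$, and running an ODE comparison---is a genuine Toponogov strategy, but it is \emph{not} the route taken in \cite{KOT1} (and hence not the one this paper invokes and imitates in its Appendix). The cited proof, visible in outline in Section~\ref{sec4}, is a second-variation / Jacobi index form argument: one builds comparison Jacobi fields $\wt X$ on a slightly flatter model $\Sph^2_\delta$, proves a strict index form gap (Lemma~\ref{2013_02_13_lem2.2}, Lemma~\ref{2013_02_13_lem2.3}), extracts a \emph{local} length comparison $L(s)\le\wt L(s)$ from the second variation of $d(p,c(s))$ (Lemma~\ref{2013_02_14_lem2.1}), and then patches these local comparisons together by the Alexandrov hinge argument. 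That approach sidesteps the Hessian of $r$ entirely; yours would instead establish a pointwise Hessian comparison for $\phi\circ r$ with reference vector $\dot\gamma=\nabla r$ and integrate it. Both are standard in the Riemannian case; yours is arguably cleaner conceptually, theirs more robust in the Finsler/cut-locus setting.

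The real gaps in your writeup. First, the \emph{cut locus of $p$}: the side $c$ may cross $\Cut(p)$, where $r=d(p,\cdot)$ is not smooth, $\nabla r$ is set-valued, and $(\phi\circ r\circ c)''$ does not exist in the classical sense. A pointwise differential inequality cannot simply be integrated across those points; you need either a support-function/barrier argument in the Calabi sense, or the subdivision-into-thin-hinges approach that the paper and \cite{KOT1} actually use. You do not mention this. Second, the \emph{$L_{\rm m}$-reparameterization}. The comparison triangle has side length $L_{\rm m}(c)=\int\max\{F(\dot c),F(-\dot c)\}$, not $d(x,y)$, and the model side runs at speed $\lambda=\max\{1,F(-\dot c)\}$ (cf.\ the appendix's $\|\dot{\tilde c}\|=\lambda$ and eq.~\eqref{2013_02_14_eq2.1}). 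Your inequality must match the $F$-arclength parameterization of $c$ against a $\lambda$-speed model side, and the upgrade via hypothesis (2), $g_{\dot\gamma}(\dot c,\dot c)\ge F(\dot c)^2=1$, only yields the correct sign in the ODE comparison \emph{because} hypothesis (1) forces $f'(r\circ c)\le 0$; the interplay of (1) and (2) is essential, and you mention both but do not track how the sign condition is what makes $f'(r)\cdot g_{\dot\gamma}(\dot c,\dot c)\le f'(r)$ go the right way. Third, the Hessian transfer you call the ``technical heart'' is asserted rather than executed: that hypothesis (3) plus $\cT_M(\dot\gamma,\cdot)=0$ yields $g_{\dot\gamma}(D^{\dot\gamma}_{\dot c}\dot c,\dot\gamma)=0$ is indeed stated in the Introduction of the paper, but you also need to verify, via the almost-compatibility of the Chern connection and the vanishing of the Cartan tensor in the reference direction, that no further correction terms survive when differentiating $g_{\dot\gamma}(\dot\gamma,\dot c)$ along $c$. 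Without those three pieces carried out, the proposal is a plausible program rather than a proof.
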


\begin{remark}
In Theorem \ref{2013_02_13_thm2.1}, $f'(t) < 0$ on $(\rho, \infty)$. 
\end{remark}

\begin{corollary}\label{2013_02_13_cor2.1}
Assume that $(M, F, p)$ is a compact connected $C^{\infty}$-Finsler manifold
whose radial flag curvature is bounded below by $1$. 
Let $\triangle (\ora{px}, \ora{py}) = (p, x, y; \gamma, \sigma, c) \subset M$ be a forward triangle 
satisfying that, for some open neighborhood $\cN(c)$ of $c$,
\begin{enumerate}[$(1)$]
\item
$c([0,d(x,y)]) \subset M \setminus \ol{B^+_{\frac{\pi}{2}} (p)}$,
\item 
$g_v(w,w) \ge F(w)^2$ for all $z \in \cN(c)$, $v \in \cG_p(z)$ and $w \in T_zM$,
\item
$\cT_{M}(v,w) = 0$ for all $z \in \cN(c)$, $v \in \cG_p(z)$ and $w \in T_zM$,
and the reverse curve $\bar{c}(s):=c(d(x,y)-s)$ of $c$ is also geodesic.
\end{enumerate}
If such $\triangle (\ora{px}, \ora{py})$ admits a comparison triangle $\triangle (\tilde{p}\tilde{x} \tilde{y})$ 
in $(\Sph^2, \tilde{p})$, 
then we have $\ora{\angle} x \ge \angle \tilde{x}$ and $\ola{\angle} y \ge \angle \tilde{y}$. 
Here, $(\Sph^2, \tilde{p})$ denotes the unit sphere, i.e., its Riemannian metric $d\tilde{s}^2$ is expressed as $d\tilde{s}^2 = dt^2 + f(t)^2d \theta^2$, $(t,\theta) \in (0,\pi) \times \Sph_{\tilde{p}}^1$, 
such that $f(t) = \sin t$.
\end{corollary}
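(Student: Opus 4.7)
The plan is to deduce Corollary \ref{2013_02_13_cor2.1} from Theorem \ref{2013_02_13_thm2.1} by embedding the spherical comparison data into a non-compact von Mangoldt surface, which is the setting to which Theorem \ref{2013_02_13_thm2.1} directly applies. Although $(\Sph^2, \tilde{p})$ is itself a von Mangoldt surface (with warping function $f(t) = \sin t$, trivially non-increasing radial curvature, and unique critical point $\rho = \pi/2 \in (0, \pi)$), Theorem \ref{2013_02_13_thm2.1} demands a model defined on $(0, \infty)$, so the compact-versus-noncompact discrepancy is precisely the gap to close.

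For each small $\ve > 0$, I would construct an auxiliary non-compact von Mangoldt surface $(\wt{M}_{\ve}, \tilde{p}_{\ve})$ whose warping function $f_{\ve}$ coincides with $\sin t$ on $[0, \pi - \ve]$. This is a standard ODE construction: prescribe a smooth non-increasing function $G_{\ve} : [0, \infty) \lra \R$ equal to $1$ on $[0, \pi - \ve]$ and decreasing smoothly to a negative asymptote on $[\pi - \ve, \infty)$, then solve $f_{\ve}'' + G_{\ve} f_{\ve} = 0$ with $f_{\ve}(0) = 0$, $f_{\ve}'(0) = 1$; the parameters can be tuned so that $f_{\ve}$ stays strictly positive on $(0, \infty)$. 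The resulting $\wt{M}_{\ve}$ is a non-compact von Mangoldt surface whose unique critical point of $f_{\ve}$ is $\rho = \pi/2$.

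With this model in hand, I would apply Theorem \ref{2013_02_13_thm2.1} to $(M, F, p)$. Since $K_M \ge 1 \ge G_{\ve}$ pointwise, the radial flag curvature of $M$ is bounded below by that of $\wt{M}_{\ve}$, and conditions $(1)$--$(3)$ of the corollary translate verbatim into the hypotheses of Theorem \ref{2013_02_13_thm2.1} with $\rho = \pi/2$. Provided $\ve$ is small enough that $\triangle(\tilde{p}\tilde{x}\tilde{y}) \subset \ol{B_{\pi - \ve}(\tilde{p})}$, the comparison triangle in $\wt{M}_{\ve}$ with identical side lengths is isometric to $\triangle(\tilde{p}\tilde{x}\tilde{y})$, because $\wt{M}_{\ve}$ and $\Sph^2$ are locally isometric on the closed ball of radius $\pi - \ve$ around their respective base points. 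Theorem \ref{2013_02_13_thm2.1} then yields $\ora{\angle}x \ge \angle\tilde{x}$ and $\ola{\angle}y \ge \angle\tilde{y}$.

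The main obstacle I anticipate is the edge case in which $d(p, x)$ or $d(p, y)$ is arbitrarily close to $\pi$, so that no single $\ve > 0$ makes the comparison triangle fit inside $\ol{B_{\pi - \ve}(\tilde{p})}$. I would handle this by a continuity argument: approximate $x$ by $x_n$ along $\gamma$ with $d(p, x_n) < \pi$ strictly (and similarly $y$ by $y_n$ along $\sigma$), apply the established case to the perturbed triangles $\triangle(\ora{p x_n}, \ora{p y_n})$, and pass to the limit using continuity of the Finsler angles in Definition \ref{2013_02_13_def2.1} and of the spherical law of cosines. Bonnet-Myers and the compactness of $M$ keep all relevant quantities uniformly bounded, so the limiting inequalities survive.
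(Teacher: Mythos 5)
The paper's own proof of this corollary is a single sentence: since $f(t)=\sin t$ gives $f'(\pi/2)=0$ with $f'<0$ on $(\pi/2,\pi)$, the unit sphere is itself a von Mangoldt surface with the required unique critical radius $\rho=\pi/2$, so Theorem~\ref{2013_02_13_thm2.1} is applied directly with $\wt M=\Sph^2$. Your proposal takes a genuinely different route: you read Theorem~\ref{2013_02_13_thm2.1} as requiring a \emph{non-compact} model (which is a defensible reading of ``$\rho\in(0,\infty)$'' and of the remark ``$f'(t)<0$ on $(\rho,\infty)$''), and so you manufacture a non-compact von Mangoldt surface $\wt M_\ve$ agreeing with $\Sph^2$ on $[0,\pi-\ve]$ and transfer the comparison back by local isometry. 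That is more conservative --- it does not rely on the theorem extending silently to compact models --- but it is also considerably heavier than what the paper does, and two steps need more justification than you give.

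First, the ODE construction is asserted, not verified. After $G_\ve$ drops below $1$, the Jacobi equation $f_\ve''+G_\ve f_\ve=0$ gives $f_\ve''>0$ wherever $G_\ve<0$ and $f_\ve>0$, so $f_\ve'$ is strictly increasing there; starting from $f_\ve'(\pi-\ve)=-\cos\ve<0$ you must ensure $f_\ve'$ tends to $0$ from below \emph{without crossing it}, or else $\pi/2$ is no longer the unique zero of $f_\ve'$. This forces a delicate balance between the asymptotic value of $G_\ve$ and the data $(f_\ve,f_\ve')$ at $\pi-\ve$ (essentially an implicit-function constraint of the form $\sqrt{-G_\ve(\infty)}=\cot\ve$ modulo transition-layer corrections), and simultaneously one must keep $f_\ve>0$ on $(0,\infty)$. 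None of this is checked. Second, the ``edge case'' argument is not routine: replacing $x,y$ by $x_n\to x$ along $\gamma$ and $y_n\to y$ along $\sigma$ replaces $c$ by a \emph{new} minimal segment $c_n$, which need not lie in $\cN(c)$, need not stay in $M\setminus\ol{B^+_{\pi/2}(p)}$ (hypothesis (1)), and has a different $L_{\rm m}(c_n)$ --- so the existence of the comparison triangle $\triangle(\tilde p\,\tilde x_n\,\tilde y_n)$ and its containment in $\ol{B_{\pi-\ve}(\tilde p)}$ both have to be re-established, and the latter concerns not just the vertices but the whole side $\tilde c_n$, which on $\Sph^2$ can extend well beyond the radius of either vertex. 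In short: your approach is a legitimate way to sidestep the compact-versus-noncompact reading of Theorem~\ref{2013_02_13_thm2.1}, and it highlights a point the paper glosses over, but as written it has real gaps at exactly the places where the paper's one-line argument is cleanest.
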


\begin{proof}
Since $f'(t) = \cos t < 0$ on $(\pi/2, \pi)$ and $f'(\pi/2) =0$ for unique $\pi/2 \in (0, \pi)$, 
the corollary is immediate from Theorem \ref{2013_02_13_thm2.1}.
$\qedd$
\end{proof}

\begin{lemma}\label{2013_02_13_lem2.1}
Assume that $(M, F, p)$ is a compact connected $C^{\infty}$-Finsler manifold
whose radial flag curvature is bounded below by $1$. 
Let $\triangle (\ora{px}, \ora{py}) = (p, x, y; \gamma, \sigma, c) \subset M$ be a forward triangle 
satisfying that, for some open neighborhood $\cN(c)$ of $c$,
\begin{enumerate}[$(1)$]
\item
$c([0,d(x,y)]) \subset B^+_{\frac{\pi}{2}} (p) \setminus \{p\}$,
\item 
$F(w)^2 \ge g_v(w,w)$ for all $z \in \cN(c)$, $v \in \cG_p(z)$ and $w \in T_zM$,
\item
$\cT_{M}(v,w) = 0$ for all $z \in \cN(c)$, $v \in \cG_p(z)$ and $w \in T_zM$,
and the reverse curve $\bar{c}(s):=c(d(x,y)-s)$ of $c$ is also geodesic.
\end{enumerate}
If such $\triangle (\ora{px}, \ora{py})$ admits a comparison triangle $\triangle (\tilde{p}\tilde{x} \tilde{y})$ 
in $(\Sph^2, \tilde{p})$, 
then we have $\ora{\angle} x \ge \angle \tilde{x}$ and $\ola{\angle} y \ge \angle \tilde{y}$. 
\end{lemma}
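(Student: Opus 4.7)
The plan is to adapt the proof of Theorem \ref{2013_02_13_thm2.1} from \cite{KOT1} to the regime where the comparison side $c$ lies \emph{inside} $B^+_{\pi/2}(p)$ rather than outside $\ol{B^+_\rho(p)}$. The key observation is that on $\Sph^2$ the warping function $f(t)=\sin t$ has $f'(t)=\cos t>0$ on $(0,\pi/2)$, so $\cot t>0$ there, which is the opposite sign from the regime of Theorem \ref{2013_02_13_thm2.1}. This sign flip is exactly compensated by the reversal of the 2-uniform convexity hypothesis (2), namely $F(w)^2\ge g_v(w,w)$ in place of $g_v(w,w)\ge F(w)^2$, so that the resulting Sturm comparison still points in the correct direction.

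Concretely, set $r(s):=d(p,c(s))$ and let $\gamma_s$ denote a minimal geodesic from $p$ to $c(s)$. The Finsler first variation (Definition \ref{2013_02_13_def2.1}) identifies $r'(s)$ with $g_{\dot\gamma_s(r(s))}(\dot\gamma_s(r(s)),\dot c(s))$. The Finsler second variation along the radial family $\{\gamma_s\}$, combined with $K_M\ge 1$, the vanishing radial tangent curvature $\cT_M\equiv 0$, and the geodesic property of $\bar c$ from (3)—which together yield $g_{\dot\gamma}(D^{\dot\gamma}_{\dot c}\dot c,\dot\gamma)=0$ and thereby eliminate the Finsler correction relating $r''$ to the $g_{\dot\gamma}$-Hessian—produces the pointwise estimate $r''(s)\le\cot(r(s))\bigl(g_{\dot\gamma}(\dot c,\dot c)-r'(s)^2\bigr)$. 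Since $\cot(r)>0$ on $B^+_{\pi/2}(p)$ and hypothesis (2) gives $g_{\dot\gamma}(\dot c,\dot c)\le F(\dot c)^2=1$, this upgrades to $r''(s)\le\cot(r(s))\bigl(1-r'(s)^2\bigr)$, which is equivalent to the Sturm-type inequality $(\cos r)''(s)+\cos r(s)\ge 0$ in the barrier sense.

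On $\Sph^2$ the analogous identity holds with equality for $\tilde r(s):=\tilde d(\tilde p,\tilde c(s))$. Since $r$ and $\tilde r$ match at both endpoints and $L_{\rm m}(c)<\pi$ (from the admissibility of the comparison triangle together with Bonnet--Myers), the maximum principle for the positive-definite operator $-(d^2/ds^2)-1$ on $[0,L_{\rm m}(c)]$ yields $\cos r\le\cos\tilde r$, i.e.~$r\ge\tilde r$. Evaluating the one-sided derivatives at $s=0$ and $s=L_{\rm m}(c)$ via Definition \ref{2013_02_13_def2.1} then converts this distance comparison into the desired angle inequalities $\ora\angle x\ge\angle\tilde x$ and $\ola\angle y\ge\angle\tilde y$.

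The principal obstacle is the non-smoothness of $r$ at points where $c(s)$ lies in the cut locus of $p$, so that $\gamma_s$ is not unique. This is handled exactly as in \cite{KOT1} by the Calabi upper-barrier trick: at each reference parameter $s_0$ one fixes a specific $v_0\in\cG_p(c(s_0))$, takes the corresponding minimal geodesic $\gamma_{s_0}$, and uses the length of the concatenation of $\gamma_{s_0}$ with a short geodesic from $c(s_0)$ to $c(s)$ as a smooth $C^2$-upper support for $r$ near $s_0$. The differential inequality is derived on each barrier and propagates to the global viscosity inequality on $r$, which is sufficient for the Sturm/maximum-principle argument. A minor bookkeeping subtlety—the difference between the unit-$F$ parameterization of $c$ and the $L_{\rm m}(c)$-parameterization of $\tilde c$—is a harmless rescaling that does not affect the angle inequalities read at the endpoints.
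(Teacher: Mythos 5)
Your route is genuinely different from the paper's. The paper (Section~\ref{sec4}) works locally: it passes to the $\delta$-perturbed sphere $\Sph^2_\delta$ of curvature $1-\delta$ to get a strict index-form gap (Lemma~\ref{2013_02_13_lem2.3}), uses this to beat the cubic error in a second-order Taylor comparison $L(s)\le\wt L(s)$ (Lemma~\ref{2013_02_14_lem2.1}), and globalizes by the subdivision-and-rearrangement scheme of \cite{KOT1}, \S\S~4--6. You instead encode the curvature bound as a barrier-sense Sturm inequality for $r(s):=d(p,c(s))$ and conclude in one stroke by a maximum principle, dispensing with both the $\delta$-perturbation and the discretization. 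Your identification of the sign mechanism --- $\cot r>0$ inside $B^+_{\pi/2}(p)$ working together with the \emph{reversed} $2$-uniform convexity~(2) --- is exactly right; it is the same mechanism that makes $\wt{I}_l(\wt{X},\wt{X})>0$ and the constant $a\ge 0$ in the proof of Lemma~\ref{2013_02_13_lem2.3}.

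The genuine gap is the closing sentence, where the $F$-versus-$L_{\rm m}$ parameterization mismatch is dismissed as ``a harmless rescaling.'' Writing $\lambda(s):=\max\{F(\dot c(s)),F(-\dot c(s))\}$, your $r$ lives on $[0,d(x,y)]$ but the model $\tilde r$ on $[0,L_{\rm m}(c)]$, and the inequality $(\cos r)''+\cos r\ge 0$ you derive is \emph{not} the equation the model side satisfies in the $s$-variable (a $\lambda$-speed model geodesic gives $(\cos\tilde r)''+\lambda^2\cos\tilde r=0$). Two facts are needed before the comparison is legitimate, and neither appears in your sketch. First, $\lambda$ must be \emph{constant} along $c$: this does follow from hypothesis~(3), since $\bar c$ being geodesic forces $F(-\dot c)$ constant, but without constancy $s\mapsto\int_0^s\lambda$ is not a rescaling at all and uncontrolled $\lambda'$-terms enter. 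Second, the rescaling is not automatically sign-preserving: setting $\hat r(u):=r(u/\lambda)$, one finds $(\cos\hat r)''+\cos\hat r=\lambda^{-2}(\cos r)''+\cos r\ge(1-\lambda^{-2})\cos r\ge 0$, and the last two steps use $\lambda\ge 1$ together with hypothesis~(1) to guarantee $\cos r>0$. Alternatively, and more cleanly, retain the sharper consequence of hypothesis~(2) recorded in \eqref{2013_02_13_lem2.3-10}, namely $g_{\dot\gamma}(\dot c,\dot c)\le\lambda^2$, to get $(\cos r)''+\lambda^2\cos r\ge 0$ directly and compare with the $\lambda$-speed model on $[0,d(x,y)]$. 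Once these points are made explicit your argument should close: the Dirichlet maximum principle is available since $\lambda\,d(x,y)=L_{\rm m}(c)<\pi$ makes $-d^2/ds^2-\lambda^2$ positive, and the Calabi upper-barrier handles the non-smoothness of $r$ as you indicate.
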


\begin{proof}
Set $\lambda:= \max \{F(w), F(-w)\}$. The assumption (2) yields $\lambda^2 \ge g_v(w,w)$. 
Hence, one can prove this lemma by the almost similar argument as that in \cite{KOT1}. 
See Section \ref{sec4} in this article for a detailed explanation of that.
$\qedd$
\end{proof}

\begin{remark}
As a corollary to Lemma \ref{2013_02_13_lem2.1}, 
a TCT holds for forward complete, connected $C^{\infty}$-Finsler manifolds $(M, F, p)$ 
whose radial flag curvatures are bounded below by a {\bf non-positive constant}, 
because, roughly speaking, the index forms on the models are positive. 
In the TCT, the assumptions (2) and (3) in Lemma \ref{2013_02_13_lem2.1} are demanded 
(owing to our theory), but we do not need to assume the (1) from their metric properties of the models. 
We shall discuss its applications, to extend the classic theorems 
in global Riemannian geometry, elsewhere. 
\end{remark}

\section{Proof of Theorem \ref{2013_02_12_thm}}\label{sec3}

Let $(M,F,p)$ be the same as that in Theorem \ref{2013_02_12_thm}. Hence, 
our model surface as a reference surface is the unit sphere $(\Sph^2, \tilde{p})$, i.e., its Riemannian metric $d\tilde{s}^2$ is expressed as $d\tilde{s}^2 = dt^2 + f(t)^2d \theta^2$, $(t,\theta) \in (0,\pi) \times \Sph_{\tilde{p}}^1$, such that $f(t) = \sin t$.

\begin{lemma}\label{2013_02_16_lem3.1}
The set $\Sph^2 \setminus B_{t}(\tilde{p})$ is strictly convex for all $t \in (\pi/2,\pi)$, i.e., 
for any distinct two points $\tilde{x},\tilde{y} \in \partial B_{t}(\tilde{p})$ 
and minimal geodesic segment $\tilde{c}:[0,a] \lra \Sph^2$ between them, we have
$\tilde{c} ( (0,a) ) \subset \Sph^2 \setminus \ol{B_{t}(\tilde{p})}$, 
where $a:= \tilde{d} (\tilde{x},\tilde{y})$
\end{lemma}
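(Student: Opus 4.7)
The plan is to identify $\Sph^2 \setminus B_t(\tilde{p})$ with the closed spherical cap $\ol{B_{\pi-t}(\tilde{q})}$, where $\tilde{q}$ denotes the antipode of $\tilde{p}$ (the unique point with $\tilde{d}(\tilde{p},\tilde{q})=\pi$): using $\tilde{d}(\tilde{q},z) = \pi - \tilde{d}(\tilde{p},z)$ on $\Sph^2$, the condition $\tilde{d}(\tilde{p},z)>t$ translates into $\tilde{d}(\tilde{q},z)<\pi-t$. Since $t>\pi/2$, this cap has radius $\pi-t<\pi/2$, and the lemma reduces to the classical assertion that a spherical cap of radius smaller than $\pi/2$ is strictly convex.

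The cleanest way to carry this out is via the Euclidean embedding $\Sph^2\subset\R^3$. For $\tilde{x},\tilde{y}\in\partial B_t(\tilde{p})$, the triangle inequality yields
\begin{equation*}
a = \tilde{d}(\tilde{x},\tilde{y}) \le \tilde{d}(\tilde{x},\tilde{q}) + \tilde{d}(\tilde{q},\tilde{y}) = 2(\pi-t) < \pi,
\end{equation*}
so the minimal geodesic $\tilde{c}$ is the unique great-circle arc of length $a$ joining them. Writing $\tilde{c}(s) = (\cos s)\,\tilde{x} + (\sin s)\,u$ for a suitable unit vector $u\in\R^3$ perpendicular to $\tilde{x}$, and noting that any linear combination of $\cos s$ and $\sin s$ has the form $A\cos(s-s_0)$, one obtains
\begin{equation*}
\langle\tilde{q},\tilde{c}(s)\rangle
= (\cos s)\langle\tilde{q},\tilde{x}\rangle + (\sin s)\langle\tilde{q},u\rangle
= A\cos(s-s_0)
\end{equation*}
for appropriate $A\ge 0$ and $s_0\in\R$.

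At $s=0$ and $s=a$ this quantity equals $\cos(\pi-t) = -\cos t > 0$ (using $t\in(\pi/2,\pi)$), so $A>0$ and $\cos(s_0) = \cos(a-s_0)$, which together with $a<\pi$ forces $s_0 = a/2$. For $s\in(0,a)$ we have $s-a/2\in(-a/2,a/2)\subset(-\pi/2,\pi/2)$, a region where $\cos$ is strictly concave; hence $\cos(s-a/2)>\cos(a/2)$, and thus $\langle\tilde{q},\tilde{c}(s)\rangle>-\cos t$. Equivalently, $\tilde{d}(\tilde{q},\tilde{c}(s)) < \pi-t$, which is exactly the desired $\tilde{c}(s)\in\Sph^2\setminus\ol{B_t(\tilde{p})}$.

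I do not foresee a substantive obstacle, as the argument is elementary spherical trigonometry. The one point that must be verified is the bound $a<\pi$, which guarantees both uniqueness of $\tilde{c}$ and that we remain in the strictly concave branch of cosine; that bound is automatic from the triangle inequality applied inside a cap of radius less than $\pi/2$.
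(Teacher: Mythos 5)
Your proof is correct, and it takes a genuinely different route from the paper's. The paper's proof is the one-line directive ``Use the second variation formula,'' which is the standard Riemannian argument: compute the index form of the distance function $\tilde d(\tilde p,\,\cdot\,)$ along the geodesic $\tilde c$, and use positive curvature (Gaussian curvature $\equiv 1$) together with $t>\pi/2$ to show that the second variation is negative, so $\tilde d(\tilde p,\tilde c(s))$ is strictly concave and hence exceeds $t$ on the interior. Your argument instead exploits the explicit model: the antipodal reduction $\Sph^2\setminus B_t(\tilde p)=\ol{B_{\pi-t}(\tilde q)}$, the Euclidean embedding $\Sph^2\subset\R^3$, and the fact that $s\mapsto\langle\tilde q,\tilde c(s)\rangle$ is a sinusoid of the form $A\cos(s-a/2)$, which attains its minimum on $[0,a]$ at the endpoints. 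The trade-off is clear: the second-variation route generalizes to any surface of revolution or positively curved model and so fits the general framework of the paper, whereas your route is completely elementary, self-contained, and avoids any variational machinery, which is appropriate since the model here is exactly the round sphere. One very small terminological nit: what you actually use is that $\cos$ is strictly decreasing on $[0,\pi/2]$, so $\cos|s-a/2|>\cos(a/2)$ for $|s-a/2|<a/2$; invoking ``strict concavity'' of $\cos$ on $(-\pi/2,\pi/2)$ also yields the inequality (a strictly concave function on a symmetric interval with equal endpoint values is strictly larger in the interior), but the monotonicity phrasing is the more direct justification. Either way the conclusion is sound.
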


\begin{proof}
Use the second variation formula. 
$\qedd$
\end{proof}

Hereafter, by the Bonnet-Myers theorem (\cite[Theorem 7.7.1]{BCS}), we may assume, without loss of generality, 
\begin{equation}\label{2013_05_19_Sec3.1}
\rad_p < \pi.
\end{equation}

\begin{lemma}{\bf (Key Lemma)}\label{2013_02_17_lem3.1}
For any distinct two points $x, y \in M \setminus \ol{B_{\frac{\pi}{2}}^+ (p)}$,
then
\[
c ( [0,d(x, y)] ) \cap \partial B_{\frac{\pi}{2}}^+ (p) = \emptyset
\]
holds for all minimal geodesic segments $c$ emanating from $x$ to $y$.
\end{lemma}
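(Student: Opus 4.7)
The plan is a proof by contradiction: suppose some minimal geodesic $c:[0,a] \lra M$ from $x$ to $y$ (with $a := d(x,y)$) meets $\partial B_{\frac{\pi}{2}}^+(p)$, and derive a contradiction. Set $f(s) := d(p, c(s))$; since $f(0), f(a) > \pi/2$ and $f$ attains $\pi/2$ at some point of $(0,a)$, define $s_1 := \inf\{s \in (0,a) : f(s) \le \pi/2\}$ and $s_2 := \sup\{s \in (0,a) : f(s) \le \pi/2\}$, so that $f(s_1) = f(s_2) = \pi/2$. After replacing $[s_1, s_2]$ with a connected component of $\{f \le \pi/2\}$ if necessary, I may arrange $f < \pi/2$ on $(s_1, s_2)$ whenever $s_1 < s_2$.

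The first step is to extract strict angle inequalities $> \pi/2$ at the touching points, via limiting applications of the TCTs of Section \ref{sec2}. For each small $\epsilon > 0$, the subarc $c|_{[0,\, s_1 - \epsilon]}$ is compactly contained in the open set $M \setminus \ol{B_{\frac{\pi}{2}}^+(p)}$, so clauses (2) and (3) of Theorem \ref{2013_02_12_thm} supply the hypotheses of Corollary \ref{2013_02_13_cor2.1} for the forward triangle $\triangle(\ora{px},\ora{pc(s_1 - \epsilon)})$. Passing to the limit $\epsilon \downarrow 0$ using continuity of Finsler angles in the vertex and of $\Sph^2$-comparison triangles in their side-lengths, I get
\[
\ola{\angle}(p\, c(s_1)\, x) \;\ge\; \angle \tilde{c}(s_1)
\]
in the limit comparison triangle on $\Sph^2$ with sides $d(p,x) > \pi/2$, $\pi/2$ and $L_{\rm m}(c|_{[0, s_1]}) < \pi$. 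Since $\tilde{d}(\tilde{p}, \tilde{c}(s_1)) = \pi/2$, the spherical law of cosines at $\tilde{c}(s_1)$ reduces to $\cos \angle \tilde{c}(s_1) = \cos d(p,x) / \sin L_{\rm m}(c|_{[0, s_1]})$, which is $< 0$ because $d(p,x) > \pi/2$; hence $\ola{\angle}(p\, c(s_1)\, x) > \pi/2$. A symmetric limiting argument gives $\ora{\angle}(p\, c(s_2)\, y) > \pi/2$. When $s_1 = s_2$, the same argument on $c|_{[s_1, a]}$ furthermore furnishes $\ora{\angle}(p\, c(s_1)\, y) > \pi/2$; when $s_1 < s_2$, I instead apply Lemma \ref{2013_02_13_lem2.1} in the limit to $c|_{[s_1 + \delta,\, s_2 - \delta]} \subset B_{\frac{\pi}{2}}^+(p) \setminus \{p\}$ — the limit comparison triangle has two sides of length $\pi/2$, so the spherical law of cosines yields angle exactly $\pi/2$ at the equatorial vertex, giving $\ora{\angle}(p\, c(s_1)\, y) = \ora{\angle}(p\, c(s_1)\, c(s_2)) \ge \pi/2$.

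In both cases I therefore obtain $\ola{\angle}(p\, c(s_1)\, x) + \ora{\angle}(p\, c(s_1)\, y) > \pi$ strictly. To contradict this, I use the semiconcavity of $f = d(p, c(\cdot))$ along $c$ away from $p$: clause (3) of Theorem \ref{2013_02_12_thm} makes the reverse $\bar{c}$ of $c$ geodesic, so $F(-\dot{c})$ is constant along $c$, and the formulas of Definition \ref{2013_02_13_def2.1} reduce to
\[
\cos \ora{\angle}(p\, c(s)\, y) = -\frac{f'(s+)}{\nu}, \qquad \cos \ola{\angle}(p\, c(s)\, x) = \frac{f'(s-)}{\nu},
\]
with $\nu := \max\{1,\, F(-\dot{c})\}$. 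Semiconcavity of $f$ gives $f'(s-) \ge f'(s+)$ at every interior $s$, hence $\cos \ora{\angle} + \cos \ola{\angle} \ge 0$, equivalently $\ora{\angle}(p\, c(s)\, y) + \ola{\angle}(p\, c(s)\, x) \le \pi$. Applied at $s = s_1$, this contradicts the strict $> \pi$ inequality above, completing the argument.

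The main obstacle will be making the limit argument for Corollary \ref{2013_02_13_cor2.1} and Lemma \ref{2013_02_13_lem2.1} fully rigorous — i.e., verifying continuity of Finsler angles and of the $\Sph^2$-comparison triangles as the subarc endpoint moves onto $\partial B_{\frac{\pi}{2}}^+(p)$, and checking that the open-neighborhood hypothesis on $\cN(c)$ persists in the limit (which is natural since the open-set assumptions of Theorem \ref{2013_02_12_thm} are preserved on a neighborhood of the compact limiting arc). A secondary but standard point is to confirm semiconcavity of $d(p, \cdot)$ along $c$, which holds because $c$ avoids $p$ by clause (3) of Theorem \ref{2013_02_12_thm}.
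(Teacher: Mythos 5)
Your argument is structurally different from the paper's and, where it applies, is a genuine simplification: instead of stringing together chains of comparison triangles on $\Sph^2$, constructing a broken geodesic $\wh{\xi}$ and a minimal geodesic $\wt{\eta}$ passing under it, and contradicting the structure of $\Cut(\tilde{p})$ (the paper's Cases 1--4), you localize the contradiction at a single boundary parameter $s_1$, extracting $\ola{\angle}(p\,c(s_1)\,x)>\pi/2$ and $\ora{\angle}(p\,c(s_1)\,y)\ge\pi/2$ by two limiting TCT applications and contradicting $\ola{\angle}+\ora{\angle}\le\pi$. The latter inequality is exactly what the paper also uses tacitly (``$\ola{\angle}\,c(r_i)+\ora{\angle}\,c(r_i)\le\pi$'' in Case 4), your spherical law-of-cosines computation is correct, and the limiting step is justified by the same upper semicontinuity of angles from \cite[Proposition~2.1]{TS} that the paper invokes.

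There is, however, a genuine gap. You assert that ``$c$ avoids $p$ by clause (3) of Theorem~\ref{2013_02_12_thm},'' but clause (3) only \emph{imposes conditions on} minimal geodesic segments that already lie in $M\setminus\{p\}$; it does not rule out a minimal geodesic from $x$ to $y$ passing through $p$, and nothing else in the hypotheses does either. If $p\in c((s_1,s_2))$, your application of Lemma~\ref{2013_02_13_lem2.1} to $c|_{[s_1+\delta,\,s_2-\delta]}$ fails for small $\delta$, since hypothesis (1) of that lemma requires the arc to lie in $B^+_{\pi/2}(p)\setminus\{p\}$; moreover the constancy of $F(-\dot c)$ along the segment, and hence the clean semiconcavity identities $\cos\ora{\angle}=-f'(s+)/\nu$, $\cos\ola{\angle}=f'(s-)/\nu$, are no longer available, because clause (3) does not apply to $c$. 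The paper isolates this situation as Case~5 and resolves it by approximating $c$ by minimal geodesics $c_i\subset M\setminus\{p\}$ and passing to the limit. A secondary, more cosmetic, loose end: the reduction to a component of $\{f\le\pi/2\}$ with $f<\pi/2$ on its open interior is not always possible, since $c$ may lie on $\partial B^+_{\pi/2}(p)$ along a nondegenerate parameter interval (the paper's Cases 1--3). Your angle-sum contradiction still goes through there (one angle is $>\pi/2$ by the exterior TCT, the other equals $\pi/2$ since $f'\equiv 0$ on the flat piece), but this configuration should be stated and checked explicitly rather than folded into the dip case.
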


\begin{proof}
Suppose that
$c ([0, d(x, y)]) \cap \partial B_{\frac{\pi}{2}}^+ (p) \not= \emptyset$
for some minimal geodesic segment $c$ emanating from $x$ to $y$. 
Then, we consider five cases:

{\bf Case 1:} Assume that there exist $s_0, s_1, s_2 \in [0,d(x, y))$ with 
$0 \le s_0 < s_1 < s_2$ such that  
\[
c([s_0, s_1))  \subset M \setminus \ol{B_{\frac{\pi}{2}}^+ (p)}, \qquad 
c([s_1, s_2]) \subset \partial B_{\frac{\pi}{2}}^+ (p).
\]
For sufficiently small $\ve >0$ with $\ve < s_1-s_0$, 
take the forward triangle $\triangle (\ora{pc(s_0)}, \ora{pc(s_1 -\ve)})$ $\subset M$. 
Note that $c([s_0, s_1-\ve]) \subset M \setminus \ol{B_{\frac{\pi}{2}}^+ (p)}$. 
Since $d(p, c(s_0)) > \pi/ 2$ and $d(p, c(s_1-\ve)) > \pi/ 2$, we have, 
by the assumption and \eqref{2013_05_19_Sec3.1}, that 
\begin{align*}
|d(p, c(s_0)) - d(p, c(s_1-\ve))| 
&\le d_{\rm m} (c(s_0), c(s_1-\ve))\\ 
&\le L_{\rm m} (c) < \pi < d(p, c(s_0)) + d(p, c(s_1-\ve)),
\end{align*}
and hence 
$\triangle (\ora{pc(s_0)}, \ora{pc(s_1 -\ve)})$ admits 
a comparison triangle $\triangle (\tilde{p}\wt{c(s_0)}\wt{c(s_1-\ve)})\subset \Sph^2$. 
By Corollary \ref{2013_02_13_cor2.1}, we have 
$\ola{\angle} (pc(s_1-\ve)c(s_0)) \ge \angle \wt{c(s_1-\ve)}$.  
It follows from \cite[Proposition 2.1]{TS} that 
\[
\frac{\pi}{2} = \ola{\angle} (pc(s_1)c(s_0)) \ge \lim_{\ve \downarrow 0}\ola{\angle} (pc(s_1-\ve)c(s_0)) 
\ge \angle \wt{c(s_1)}.
\]
Set 
$\triangle (\tilde{p}\wt{c(s_0)}\wt{c(s_1)}) := \lim_{\ve \downarrow 0}\triangle (\tilde{p}\wt{c(s_0)}\wt{c(s_1-\ve)})$, 
and let $\wt{\mu}:[0, \tilde{d}(\wt{c(s_0)}, \wt{c(s_1)})] \lra \Sph^2$ denote the side of $\triangle (\tilde{p}\wt{c(s_0)}\wt{c(s_1)})$ joining $\wt{c(s_0)}$ to $\wt{c(s_1)}$. 
If $\angle \wt{c(s_1)} = \pi/2$, then
\[
\wt{\mu}( [ 0, \tilde{d}(\wt{c(s_0)}, \wt{c(s_1)}) ] ) \subset \partial B_{\frac{\pi}{2}} (\tilde{p})
\] 
because $\partial B_{\frac{\pi}{2}} (\tilde{p})$ is geodesic. 
This contradicts $\tilde{d}(\tilde{p}, \wt{c(s_0)})> \pi/ 2$. 
If $\angle \wt{c(s_1)} < \pi/2$, 
then there exists $a \in ( 0, \tilde{d}(\wt{c(s_0)}, \wt{c(s_1)}) )$ 
such that $\wt{\mu}(a) \in \partial B_{\frac{\pi}{2}} (\tilde{p})$. 
This contradicts the structure of the cut locus 
of $\Sph^2$ because $\angle (\wt{\mu}(a) \tilde{p} \wt{c(s_1)}) < \pi$ and $\partial B_{\frac{\pi}{2}} (\tilde{p})$ is geodesic.

{\bf Case 2:} Assume that there exist $s_3, s_4, s_5 \in (0,d(x, y)]$ with 
$0 < s_3 < s_4 < s_5$ such that  
\[
c([s_3, s_4])  \subset \partial B_{\frac{\pi}{2}}^+ (p), \qquad 
c((s_4, s_5]) \subset M \setminus \ol{B_{\frac{\pi}{2}}^+ (p)}.
\]
Consider the forward triangle $\triangle (\ora{pc(s_4 + \ve)}, \ora{pc(s_5)}) \subset M$, 
where $\ve >0$ is sufficiently small with $\ve < s_5 -s_4$. Applying the similar limit 
argument in Case 1 to $\triangle (\ora{pc(s_4 + \ve)}, \ora{pc(s_5)})$, 
we have the triangle $\triangle (\tilde{p}\wt{c(s_4)}\wt{c(s_5)}) 
:= \lim_{\ve \downarrow 0}\triangle (\tilde{p}\wt{c(s_4 + \ve)}\wt{c(s_5)})$ 
satisfying $\angle \wt{c(s_4)} \le \pi/2$. The angle condition yields 
the same contradiction as that in Case 1.

{\bf Case 3:} Assume that there exist $s_0, s_1, s_2 \in [0,d(x, y)]$ with 
$s_0 < s_1 < s_2$ such that  
\[
c ([0, d(x, y)]) \cap \partial B_{\frac{\pi}{2}}^+ (p) = \{ c(s_1)\}, \qquad 
c((s_0, s_1)), c((s_1, s_2)) \subset M \setminus \ol{B_{\frac{\pi}{2}}^+ (p)}.
\]
Then, we get a contradiction from the same argument as Case 1, or Case 2. 

{\bf Case 4:} Assume that there exist $s_0, s_1 \in (0,d(x, y))$ with 
$s_0 < s_1$ such that  
\[
c((s_0, s_1))  \subset B_{\frac{\pi}{2}}^+ (p) \setminus \{p\}, \quad 
c(s_0), c(s_1) \in \partial B_{\frac{\pi}{2}}^+ (p), \quad 
\]
and that 
\begin{equation}\label{2013_02_17_lem3.2-1}
\ora{\angle} (pc(s_0)c(s_1)) < \frac{\pi}{2}, \quad \ola{\angle} (pc(s_1)c(s_0)) < \frac{\pi}{2}.  
\end{equation}
Take a subdivision $r_0 := s_0 < r_1 < \cdots < r_{k -1} <r_k :=  s_1$ 
of $[s_0, s_1]$ such that 
$\triangle (\ora{p c (r_{i-1})}, \ora{p c (r_i)})$ 
admits a comparison triangle 
$\wt{\triangle}^i := \triangle (\tilde{p} \wt{c (r_{i-1})} \wt{c (r_i)}) \subset \Sph^2$
for each $i = 1,2, \ldots, k$. 
Applying Lemma \ref{2013_02_13_lem2.1} to $\triangle (\ora{p c (r_{i-1})}, \ora{p c (r_i)})$, 
but for each $i = 2, 3, \ldots, k-1$, we have 
\begin{equation}\label{2013_02_17_lem3.2-2}
\ora{\angle} c (r_{i-1})  
\ge 
\angle\big( \tilde{p} \wt{c (r_{i-1})} \wt{c (r_i)} \big), \qquad 
\ola{\angle}c (r_i)
\ge 
\angle \big( \tilde{p} \wt{c (r_i)} \wt{c (r_{i-1})} \big).
\end{equation}
For sufficiently small $\ve, \delta >0$ with $\ve < r_1-r_0$ and $\delta < r_k -r_{k-1}$, 
take two forward triangles 
$\triangle (\ora{pc(r_0 + \ve)}, \ora{pc(r_1)}), \triangle (\ora{pc(r_{k-1})}, 
\ora{pc(r_k-\delta)}) \subset M$. 
Note that these two triangles admit their comparison triangles 
$\wt{\triangle}_\ve := \triangle (\tilde{p} \wt{c(r_0 + \ve)} \wt{c(r_1)}), 
\wt{\triangle}_\delta := \triangle (\tilde{p} \wt{c(r_{k-1})} \wt{c(r_k-\delta)}) \subset \Sph^2$, 
respectively. 
Without loss of generality, we may assume 
$\wt{\triangle}^1 = \lim_{\ve \downarrow 0}\wt{\triangle}_\ve$ and 
$\wt{\triangle}^k = \lim_{\delta \downarrow 0}\wt{\triangle}_\delta$ 
because $\lim_{\ve \downarrow 0}\wt{\triangle}_\ve$ and $\lim_{\delta \downarrow 0}\wt{\triangle}_\delta$ are 
isometric to $\wt{\triangle}^1$ and $\wt{\triangle}^k$, respectively. 
By Lemma \ref{2013_02_13_lem2.1}, 
$
\ora{\angle} c (r_0 + \ve)  
\ge 
\angle\big( \tilde{p} \wt{c (r_0 + \ve)} \wt{c (r_1)} \big)$,  
$\ola{\angle}c (r_1)
\ge 
\angle \big( \tilde{p} \wt{c (r_1)} \wt{c (r_0 + \ve)} \big)$, and that 
$\ora{\angle} c (r_{k-1})  
\ge 
\angle\big( \tilde{p} \wt{c (r_{k-1})} \wt{c (r_k -\delta)} \big)$, 
$
\ola{\angle}c (r_k -\delta)
\ge 
\angle \big( \tilde{p} \wt{c (r_k -\delta)} \wt{c (r_{k-1})} \big)$. 
Hence, it follows from \eqref{2013_02_17_lem3.2-1} and \cite[Proposition 2.1]{TS} that 
\begin{equation}\label{2013_02_17_lem3.2-3}
\frac{\pi}{2} > \ora{\angle} c(r_0) 
\ge \lim_{\ve \downarrow 0}\ora{\angle} c (r_0 + \ve) 
\ge \angle\big( \tilde{p} \wt{c (r_0)} \wt{c (r_1)} \big), \quad 
\ola{\angle}c (r_1) \ge \angle\big( \tilde{p} \wt{c (r_1)} \wt{c (r_0)} \big),
\end{equation}
and that 
\begin{equation}\label{2013_02_17_lem3.2-4}
\ora{\angle} c (r_{k-1})  
\ge 
\angle\big( \tilde{p} \wt{c (r_{k-1})} \wt{c (r_k)} \big), \ 
\frac{\pi}{2} > \ola{\angle}c (r_k) \ge \lim_{\delta \downarrow 0}\ola{\angle}c (r_k -\delta)
\ge 
\angle \big( \tilde{p} \wt{c (r_k)} \wt{c (r_{k-1})} \big). 
\end{equation}
Starting from $\wt{\triangle}^1$, we inductively draw a geodesic triangle 
$\wt{\triangle}^{i + 1} \subset \Sph^2$ which is adjacent to $\wt{\triangle}^i$ 
so as to have a common side
$\tilde{p} \wt{c (r_i)}$, where 
$0 := \theta (\wt{c (r_0)}) \le \theta (\wt{c (r_1)}) \le \cdots \le \theta (\wt{c (r_k)})$.
Since $\ola{\angle} c (r_i) + \ora{\angle} c (r_i) \le \pi$ for each $i = 1,2, \ldots, k-1$,  
we obtain, by \eqref{2013_02_17_lem3.2-2}, \eqref{2013_02_17_lem3.2-3}, \eqref{2013_02_17_lem3.2-4}, 
\begin{equation}\label{2013_02_17_lem3.2-5}
\angle\big( \tilde{p} \wt{c (r_i)} \wt{c (r_{i-1})} \big) + 
\angle\big( \tilde{p} \wt{c (r_i)} \wt{c (r_{i+1})} \big) 
\le \pi.
\end{equation}
Let $\wh{\xi}:[0, L_{\rm m} (c|_{[s_0,\,s_1]})] \lra \Sph^2$ denote the broken geodesic segment 
consisting of minimal geodesic segments from $\wt{c (r_{i-1})}$ to $\wt{c (r_i)}$,
$i=1,2,\ldots,k$.
Set $\wh{\xi} (s) := (t(\wh{\xi} (s)), \theta (\wh{\xi} (s)))$. 
By \eqref{2013_02_17_lem3.2-5}, we have the unit speed, 
but not necessarily minimal in this moment, geodesic 
$\wt{\eta} :[0, a] \lra \Sph^2$ emanating from $\wt{c (r_0)}$ to $\wt{c (r_k)}$ 
and passing under $\wh{\xi} ( [0, L_{\rm m} (c|_{[s_0,\,s_1]})] )$, i.e., 
$\theta (\wt{\eta}) \in [0, \theta (\wt{c (r_k)})]$ on $[0, a]$ 
and $t(\wh{\xi} (s)) > t(\wt{\eta} (u))$ for all $(s,u) \in (0,L_{\rm m} (c|_{[s_0,\,s_1]})) \times (0, a)$ with 
$\theta (\wh{\xi} (s)) = \theta (\wt{\eta} (u))$. 
Since $a \le L_{\rm m} (c|_{[s_0,\,s_1]}) < \pi$ by the assumption and \eqref{2013_05_19_Sec3.1}, 
$\wt{\eta}$ is minimal with $\angle (\wt{\eta}(0)\,\tilde{p}\,\wt{\eta}(a)) < \pi$. 
This contradicts the structure of the cut locus 
of $\Sph^2$ because $\partial B_{\frac{\pi}{2}} (\tilde{p})$ is geodesic.

{\bf Case 5:} Assume that $c$ is passing through $p$. Take a sequence $\{c_i:[0,l_i] \lra M \setminus \{p\}\}_{i\in \N}$ of minimal geodesic segments $c_i$ emanating from $x = c_i(0)$ 
convergent to $c$. Applying the same argument as that in Case 4 to each $c_i$ for sufficiently large $i$, 
we get a contradiction. Note that $\lim_{i\to\infty}L_{\rm m} (c_i) =L_{\rm m} (c) \le \rad_p$, but $x, y \in M \setminus \ol{B_{\frac{\pi}{2}}^+ (p)}$.\par 
Therefore, $c \big( [0,d(x, y)] \big) \cap \partial B_{\frac{\pi}{2}}^+ (p) = \emptyset$ 
holds for all minimal geodesic segments $c$ emanating from $x$ to $y$.
$\qedd$
\end{proof}

\begin{lemma}\label{2013_02_17_lem3.2}
The set $M \setminus B^+_{\frac{\pi}{2}}(p)$ is convex.
\end{lemma}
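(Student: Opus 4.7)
The plan is to argue by contradiction and reduce the statement to (a boundary version of) Case~4 of the Key Lemma (Lemma~\ref{2013_02_17_lem3.1}). Given $x, y \in M \setminus B^+_{\frac{\pi}{2}}(p)$ and a minimal geodesic $c:[0,a] \lra M$ from $x$ to $y$, suppose for contradiction that $\phi(s^*) := d(p, c(s^*)) < \pi/2$ for some $s^* \in (0,a)$. Since $\phi$ is continuous on $[0,a]$ with $\phi(0), \phi(a) \ge \pi/2$, I would pick the maximal connected component $(s_1, s_2) \ni s^*$ of the open subset $\{\phi < \pi/2\} \subset [0,a]$; by continuity (the boundary cases $s_1 = 0$ or $s_2 = a$ force $\phi(x) = \pi/2$ or $\phi(y) = \pi/2$ respectively) we have $\phi(s_1) = \phi(s_2) = \pi/2$. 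Then $c|_{[s_1, s_2]}$ is a minimal sub-geodesic whose endpoints lie on $\partial B^+_{\frac{\pi}{2}}(p)$ and whose interior lies in $B^+_{\frac{\pi}{2}}(p) \setminus \{p\}$, provided $c$ avoids $p$; the exceptional case would be cleared by the same perturbation used in Case~5 of the Key Lemma.

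The bulk of the work is then to re-run the Case~4 construction of the Key Lemma on $c|_{[s_1, s_2]}$ almost verbatim: take a fine subdivision $s_1 = r_0 < r_1 < \cdots < r_k = s_2$ so that each sub-triangle $\triangle(\ora{pc(r_{i-1})}, \ora{pc(r_i)})$ admits a comparison triangle $\wt{\triangle}^i \subset \Sph^2$, apply Lemma~\ref{2013_02_13_lem2.1} (with the perturbation $\ve, \delta \downarrow 0$ at the two extreme sub-triangles, since their outer vertices sit on $\partial B^+_{\frac{\pi}{2}}(p)$) to compare angles, glue the $\wt{\triangle}^i$ along common radii into a broken geodesic $\wh{\xi}$ in $\ol{B_{\frac{\pi}{2}}(\tilde p)}$ from $\wt{c(s_1)}$ to $\wt{c(s_2)}$, use the adjacent-angle estimate $\ola{\angle} c(r_i) + \ora{\angle} c(r_i) \le \pi$ at each interior vertex to deduce $\angle(\tilde p \wt{c(r_i)} \wt{c(r_{i-1})}) + \angle(\tilde p \wt{c(r_i)} \wt{c(r_{i+1})}) \le \pi$, and straighten $\wh{\xi}$ to a competitor geodesic $\wt{\eta}$ passing under it. Since $L_{\rm m}(c|_{[s_1,s_2]}) \le L_{\rm m}(c) < \pi$, the competitor $\wt{\eta}$ is minimal, and this yields the same contradiction with the cut-locus structure of $\Sph^2$ as in the Key Lemma (the only minimal $\Sph^2$-geodesic between two non-antipodal equatorial points is the equatorial arc, which cannot dip strictly inside $B_{\frac{\pi}{2}}(\tilde p)$).

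The main (and essentially only) obstacle is that Case~4 of the Key Lemma is stated under the strict endpoint conditions $\ora{\angle}(p c(s_1) c(s_2)), \ola{\angle}(p c(s_2) c(s_1)) < \pi/2$, whereas in the present setting the maximality of $(s_1, s_2)$ only yields $\le \pi/2$: from $\phi(s_1 + h) \le \pi/2 = \phi(s_1)$ for small $h > 0$ we read off $\cos \ora{\angle}(p c(s_1) c(s_2)) \ge 0$, but tangential equality cannot be ruled out a priori. I would sidestep this purely on the $\Sph^2$-side, noting that the corresponding comparison angle is automatically strict: because $c(r_1) \in B^+_{\frac{\pi}{2}}(p) \setminus \{p\}$, the vertex $\wt{c(r_1)}$ lies strictly inside the open hemisphere $B_{\frac{\pi}{2}}(\tilde p) \setminus \{\tilde p\}$, so the unique minimal $\Sph^2$-geodesic from the equatorial point $\wt{c(s_1)}$ to $\wt{c(r_1)}$ leaves $\wt{c(s_1)}$ with a strictly poleward component, forcing $\angle(\tilde p \wt{c(s_1)} \wt{c(r_1)}) < \pi/2$ independently of whether the $M$-angle at $c(s_1)$ is tangential; the symmetric bound holds at $\wt{c(s_2)}$. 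This is exactly the strict inequality that the two end-triangles of the Case~4 construction require, so the remainder of the argument carries through with no further modification.
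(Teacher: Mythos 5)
Your argument is correct, but it takes a genuinely different route from the paper's. The paper proves the lemma by citing three previously established facts: the Key Lemma (Lemma~\ref{2013_02_17_lem3.1}), which guarantees that the side $c$ of the forward triangle $\triangle(\ora{px},\ora{py})$ never meets $\partial B^+_{\frac{\pi}{2}}(p)$; Corollary~\ref{2013_02_13_cor2.1}, which then applies to this triangle; and Lemma~\ref{2013_02_16_lem3.1}. (In fact, once the Key Lemma is in hand, the intermediate value theorem already yields $c \subset M \setminus \ol{B^+_{\frac{\pi}{2}}(p)}$ for $x,y$ strictly outside $\ol{B^+_{\frac{\pi}{2}}(p)}$; the remaining citations are essentially a re-packaging.) You instead argue by contradiction, extract a maximal sub-arc $c|_{[s_1,s_2]}$ dipping into the open ball with both endpoints on $\partial B^+_{\frac{\pi}{2}}(p)$, and re-run the Case~4 machinery from the Key Lemma's own proof on it. This repeats work the Key Lemma already did, but it has the real advantage of covering endpoints $x, y$ lying on $\partial B^+_{\frac{\pi}{2}}(p)$ directly, a case the paper's proof silently excludes by starting from $M \setminus \ol{B^+_{\frac{\pi}{2}}(p)}$.

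Your identification of the one obstacle --- that Case~4 of the Key Lemma assumes the strict endpoint angle bounds \eqref{2013_02_17_lem3.2-1}, whereas maximality of $(s_1,s_2)$ gives you only $\ora{\angle}(p c(s_1) c(s_2)) \le \pi/2$ and $\ola{\angle}(p c(s_2) c(s_1)) \le \pi/2$ --- is exactly right, and your $\Sph^2$-side fix is valid. In the first comparison triangle $\wt{\triangle}^1 = \triangle(\tilde p\, \wt{c(r_0)}\, \wt{c(r_1)})$ one has $\tilde d(\tilde p, \wt{c(r_0)}) = \pi/2$ and $\tilde d(\tilde p, \wt{c(r_1)}) < \pi/2$, so the spherical law of cosines gives
\[
\cos \tilde d\big(\tilde p, \wt{c(r_1)}\big)
 = \sin \tilde d\big(\wt{c(r_0)}, \wt{c(r_1)}\big) \cdot \cos \angle\big(\tilde p\, \wt{c(r_0)}\, \wt{c(r_1)}\big),
\]
hence $\cos \angle(\tilde p\, \wt{c(r_0)}\, \wt{c(r_1)}) > 0$, i.e.\ $\angle(\tilde p\, \wt{c(r_0)}\, \wt{c(r_1)}) < \pi/2$, with the symmetric bound at $\wt{c(r_k)}$. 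These strict comparison angles are precisely what the straightening construction and the cut-locus contradiction of Case~4 require, so the remainder of your argument carries over.
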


\begin{proof}
Take any distinct two points $x, y \in M \setminus \ol{B^+_{\frac{\pi}{2}}(p)}$, and let 
$c$ denote a minimal geodesic segment emanating from $x$ to $y$. 
Since $|d(p, x) - d(p, y)| \le L_{\rm m} (c) <  d(p, x) + d(p, y)$, 
the forward triangle $\triangle (\ora{px}, \ora{py}) \subset M$ admits 
its comparison triangle $\triangle (\tilde{p}\tilde{x}\tilde{y})\subset \Sph^2$. 
Thanks to Lemma \ref{2013_02_17_lem3.1}, we may apply Corollary \ref{2013_02_13_cor2.1} to 
$\triangle (\ora{px}, \ora{py})$. Combining Lemma \ref{2013_02_16_lem3.1} with Corollary \ref{2013_02_13_cor2.1}, we get the assertion.
$\qedd$
\end{proof}

\begin{lemma}\label{2013_02_17_lem3.3}
The function $d(p,\,\cdot\,)$ attains its maximum at a unique point $q \in M$.
In particular, $M \setminus B^+_{\frac{\pi}{2}}(p)$ is a topological disk.
\end{lemma}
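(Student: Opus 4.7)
The plan is to dispose of existence immediately---compactness of $M$ gives some $q$ with $d(p,q)=\mathrm{rad}_p=:r>\pi/2$---and then to prove uniqueness by contradiction. Suppose there is a second maximum point $q' \neq q$. Both $q,q'$ lie in $M\setminus\ol{B^+_{\pi/2}(p)}$, so Lemmas \ref{2013_02_17_lem3.1} and \ref{2013_02_17_lem3.2} show that every minimal geodesic $c:[0,a]\lra M$ from $q$ to $q'$ stays inside $M\setminus\ol{B^+_{\pi/2}(p)}$, while assumption (3) of Theorem \ref{2013_02_12_thm} gives $L_{\rm m}(c)<\pi$. Hence $\triangle(\ora{pq},\ora{pq'})$ admits a comparison triangle $\triangle(\tilde p\,\tilde q\,\tilde q') \subset \Sph^2$, and Corollary \ref{2013_02_13_cor2.1} yields $\ora{\angle}q \ge \angle\tilde q$ and $\ola{\angle}q' \ge \angle\tilde q'$.

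Because $q,q'$ are global maxima of $d(p,\cdot)$, Definition \ref{2013_02_13_def2.1} immediately forces $\cos\ora{\angle}q\ge 0$ and $\cos\ola{\angle}q'\ge 0$, so both interior angles are at most $\pi/2$. On the other hand, the spherical law of cosines applied to the isoceles comparison triangle gives
$$\cos\angle\tilde q=\frac{\cos r\,\bigl(1-\cos L_{\rm m}(c)\bigr)}{\sin r\,\sin L_{\rm m}(c)};$$
since $r\in(\pi/2,\pi)$ and $L_{\rm m}(c)\in(0,\pi)$ the numerator is strictly negative and the denominator strictly positive, so $\angle\tilde q > \pi/2$. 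This contradicts $\pi/2\ge\ora{\angle}q\ge\angle\tilde q$, establishing uniqueness.

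For the topological disk statement, my plan is to show that $d_p:=d(p,\cdot)$ has no critical point (in the Grove-Shiohama sense) on the open annular region $\{x\in M:\pi/2<d_p(x)<r\}$. Fix such an $x$; Lemmas \ref{2013_02_17_lem3.1} and \ref{2013_02_17_lem3.2} provide a minimal geodesic $c:[0,a]\lra M$ from $x$ to $q$ contained in $M\setminus\ol{B^+_{\pi/2}(p)}$, so Corollary \ref{2013_02_13_cor2.1} applies to $\triangle(\ora{px},\ora{pq})$. The comparison triangle in $\Sph^2$ has $\pi/2<d(p,x)<d(p,q)<\pi$ and $L_{\rm m}(c)\in(0,\pi)$; writing the numerator of the spherical law of cosines for $\cos\angle\tilde x$ as $\bigl(\cos d(p,q) - \cos d(p,x)\bigr) + \cos d(p,x)\,\bigl(1-\cos L_{\rm m}(c)\bigr)$, the first summand is strictly negative by monotonicity of $\cos$ on $(0,\pi)$ and the second non-positive since $\cos d(p,x)<0$, whence $\angle\tilde x > \pi/2$ and consequently $\ora{\angle}x > \pi/2$. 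By Definition \ref{2013_02_13_def2.1} this is exactly the statement that $\dot c(0)$ is a direction of strict increase of $d_p$ at $x$, so $x$ is not a critical point.

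The final step---which I expect to be the main obstacle---is to translate this regularity into the topological conclusion. One needs a Finsler-analog gradient-like flow for $d_p$ on $M\setminus\{p,q\}$ whose integral curves strictly increase $d_p$, so that the absence of critical values in $(\pi/2,r)$ forces $M\setminus B^+_{\pi/2}(p)$ to deformation retract onto an arbitrarily small closed metric ball around $q$. Since such a ball is homeomorphic to $D^n$ via the local Euclidean structure of $M$ at $q$, this yields the topological disk conclusion and completes the lemma.
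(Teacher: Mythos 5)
Your uniqueness argument is correct and in the same spirit as the paper's: both apply Corollary~\ref{2013_02_13_cor2.1} to the forward triangle on the two farthest points, using Lemmas~\ref{2013_02_17_lem3.1} and~\ref{2013_02_17_lem3.2} to place the connecting segment strictly outside $\ol{B^+_{\pi/2}(p)}$ and assumption~(3) to obtain a comparison triangle. The difference is in how the contradiction is extracted: you note that a global maximum forces $\ora{\angle}q,\ola{\angle}q'\le\pi/2$ and then make the comparison angle explicitly $>\pi/2$ via the spherical law of cosines, whereas the paper routes through the strict convexity of $\Sph^2\setminus B_t(\tilde p)$ (Lemma~\ref{2013_02_16_lem3.1}) to get $\tilde d(\tilde p,\tilde c(s))>\rad_p$ for interior points of the comparison side. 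Either is a clean application of the TCT, and both implicitly assume the spherical comparison triangle exists given the triangle inequalities (the paper makes the same assumption).

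For the topological-disk assertion the two routes genuinely differ. The paper simply says it ``follows from Lemma~\ref{2013_02_17_lem3.2}'' (convexity), while you give a self-contained critical-point argument. Your angle computation showing $\ora{\angle}x>\pi/2$ on the open annulus $\{\pi/2<d(p,\cdot)<\rad_p\}$ is correct, and it does imply non-criticality, because the one-sided derivative of $d(p,\cdot)$ at $x$ along $c$ equals $\min_{v\in\cG_p(x)}g_v(v,\dot c(0))$ and $\ora{\angle}x>\pi/2$ is equivalent to positivity of that minimum. What is missing to actually finish: (i) the boundary $\partial B^+_{\pi/2}(p)$ must also be free of critical points, and your argument only covers the open annulus---you need the limiting argument (as in the proof of Lemma~\ref{2013_02_17_lem3.1} via \cite[Prop.~2.1]{TS}) or the convexity Lemma~\ref{2013_02_17_lem3.2} to reach the boundary, which is exactly how the paper's companion Lemma~\ref{2013_02_17_lem3.5} begins; and (ii) the ``gradient-like flow'' you gesture at is already packaged in the paper as the isotopy Lemma~\ref{2013_02_17_lem3.4}; apply it on $\ol{B^+_{\rad_p-\varepsilon}(p)}\setminus B^+_{\pi/2}(p)$ and then attach the small forward ball around $q$, which is a disk since $q$ has a neighborhood disjoint from $\Cut(q)$. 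With those two pieces filled in, your argument is sound and arguably more explicit than the paper's one-line citation.
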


\begin{proof} 
Suppose that there exist two distinct points $x,y \in \partial B^+_{\rad_p}(p)$.
Then, the forward triangle $\triangle (\ora{px}, \ora{py}) \subset M$ admits 
its comparison triangle $\triangle (\tilde{p}\tilde{x}\tilde{y})\subset \Sph^2$.
Let $c:[0,d(x,y)] \lra M$ and $\tilde{c}:[0,L_{\rm m} (c)] \lra \Sph^2$ be sides 
of $\triangle (\ora{px}, \ora{py})$ and $\triangle (\tilde{p}\tilde{x}\tilde{y})$ emanating from 
$x$ to $y$ and from $\tilde{x}$ to $\tilde{y}$, respectively.
By Corollary \ref{2013_02_13_cor2.1} and Lemma \ref{2013_02_16_lem3.1}, 
$d \big( p,c(s) \big) >\rad_p$ holds for all $s \in (0,d(x,y))$. 
This contradicts the definition of $\rad_p$.
The second assertion follows from Lemma \ref{2013_02_17_lem3.2}. 
$\qedd$
\end{proof}

We say that a point $x \in M$ is a {\em (forward) critical point} for $p \in M$
if, for every $w \in T_{x} M \setminus \{0\}$, 
there exists $v \in \cG_p (x)$ such that $g_v (v, w) \le 0$ 
(see \eqref{G_p} for the definition of $\cG_p (x)$).
Then, we may prove Gromov's isotopy lemma \cite{G} by 
similar arguments to the Riemannian case:

\begin{lemma}\label{2013_02_17_lem3.4}
Given $0 < r_1 < r_2 \le \infty$, 
if $\ol{B_{r_2}^+(p)} \setminus B_{r_1}^+(p)$ has no critical point for $p \in M$, 
then $\ol{B_{r_2}^+(p)} \setminus B_{r_1}^+(p)$ is homeomorphic to 
$\partial B_{r_1}^+(p) \times [r_1, r_2 ]$.
\end{lemma}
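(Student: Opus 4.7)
The plan is to imitate the standard Riemannian proof of Gromov's isotopy lemma by constructing a \emph{gradient-like vector field} on the annulus $\ol{B_{r_2}^+(p)} \setminus B_{r_1}^+(p)$ whose flow sweeps the inner forward sphere onto the outer one monotonically with respect to $d(p,\cdot)$. What makes this go through in the Finsler setting is that, although $g_v$ depends non-linearly on $v$, the map $w \mapsto g_v(v,w)$ is linear, so the set of gradient-like directions at a non-critical point is a convex open cone and is preserved under convex combinations.

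First I would reformulate the hypothesis: by definition, $y$ is non-critical for $p$ precisely when there exists $w_y \in T_yM$ with $g_v(v,w_y) > 0$ for every $v \in \cG_p(y)$. Next I would record that the set-valued map $y \mapsto \cG_p(y)$ on $M \setminus \{p\}$ is upper semicontinuous with compact values, a direct consequence of forward completeness together with the Arzel\`a--Ascoli theorem applied to unit-speed minimizing segments from $p$. Combining these, given a non-critical $y$, the vector $w_y$ (transported as a constant-coefficient vector field in a local chart) continues to satisfy $g_v(v,w_y) > 0$ for all $v \in \cG_p(z)$ whenever $z$ lies in a sufficiently small neighborhood $U_y$; equivalently, the lower-semicontinuous function $z \mapsto \min_{v \in \cG_p(z)} g_v(v,w_y)$ is positive on $U_y$.

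Compactness of $\ol{B_{r_2}^+(p)} \setminus B_{r_1}^+(p)$ (from the Hopf--Rinow theorem for forward complete Finsler manifolds when $r_2 < \infty$; paracompactness otherwise) then yields a locally finite cover $\{U_i\}$ by such neighborhoods. Picking a subordinate partition of unity $\{\phi_i\}$ and the corresponding local vector fields $W_i$, I would define the global vector field
\[
V(z) \;:=\; \sum_i \phi_i(z)\, W_i(z).
\]
Linearity of $g_v(v,\cdot)$ in the second argument gives
\[
g_v\bigl(v, V(z)\bigr) \;=\; \sum_i \phi_i(z)\, g_v\bigl(v, W_i(z)\bigr) \;>\; 0
\]
for every $v \in \cG_p(z)$, so $V$ is gradient-like throughout the annulus. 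The Finslerian first-variation formula then yields
\[
\frac{d^+}{dh}\, d\bigl(p, \Phi_h(z)\bigr)\Big|_{h=0} \;=\; \min_{v \in \cG_p(z)} g_v\bigl(v, V(z)\bigr) \;>\; 0,
\]
where $\Phi_h$ denotes the flow of $V$; hence $h \mapsto d(p, \Phi_h(z))$ is strictly increasing. Each integral curve therefore crosses each forward sphere $\partial B_t^+(p)$ in at most one point, and by continuity together with the intermediate value theorem, in exactly one point between the two bounding spheres. The desired homeomorphism
\[
\Psi: \partial B_{r_1}^+(p) \times [r_1, r_2] \;\lra\; \ol{B_{r_2}^+(p)} \setminus B_{r_1}^+(p)
\]
sends $(y, t)$ to the unique point of the integral curve of $V$ through $y$ that lies on $\partial B_t^+(p)$, with continuity of $\Psi^{-1}$ following from continuity of the flow together with the strict monotonicity above.

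The main obstacle will be the careful handling of the first-variation step: $d(p,\cdot)$ is only locally Lipschitz, $\cG_p(z)$ may contain several elements, and one must promote the pointwise positivity $\min_v g_v(v, V(z)) > 0$ to a uniform positive lower bound on any compact subset of the annulus so as to ensure the flow is defined long enough to realize $\Psi$. This is precisely where the upper semicontinuity of $\cG_p$ and the compactness of its fibers must be invoked, converting the pointwise non-criticality condition into the uniform transversality needed to run the partition-of-unity and reparameterization arguments.
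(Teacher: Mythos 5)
Your argument is correct and is exactly what the paper invokes: the paper proves this lemma only by observing that ``a similar argument to the Riemannian case'' applies, and your construction of the gradient-like vector field via a partition of unity, using linearity of $g_v(v,\cdot)$ in the second slot and upper semicontinuity of $y\mapsto\cG_p(y)$ to propagate non-criticality, together with monotonicity of $d(p,\cdot)$ along the flow from the Finslerian first-variation formula, is precisely that standard adaptation.
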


\begin{lemma}\label{2013_02_17_lem3.5}
There are no critical point for $p$ in $\ol{B^+_{\frac{\pi}{2}}(p)} \setminus \{ p \}$. 
In particular, $\ol{B^+_{\frac{\pi}{2}}(p)}$ is a topological disk.
\end{lemma}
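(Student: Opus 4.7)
The plan is a proof by contradiction modeled on Grove--Shiohama, using the unique farthest point $q\in M$ supplied by Lemma~\ref{2013_02_17_lem3.3} and a spherical-trigonometry argument combining the forward angle at $x$ with the backward angle at $q$.

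Suppose $x\in\ol{B^+_{\pi/2}(p)}\setminus\{p\}$ is critical. I would pick a unit speed minimal geodesic $\sigma:[0,L]\lra M$ from $x$ to $q$, set $w:=\dot\sigma(0)$, and apply the criticality assumption to $w$ to obtain a $v\in\cG_p(x)$ with $g_v(v,w)\le 0$; let $\gamma$ be the corresponding minimal geodesic from $p$ to $x$ satisfying $\dot\gamma(l)=v$. In the forward triangle $\triangle(\ora{px},\ora{pq})=(p,x,q;\gamma,\sigma_0,\sigma)$, the Finsler first-variation formula for $d(p,\cdot)$ at $s=0$ along $\sigma$ converts $g_v(v,w)\le 0$ into $\cos\ora\angle x\ge 0$, i.e.\ $\ora\angle x\le\pi/2$. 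Since $q$ is the maximum of $d(p,\cdot)$, the first variation at $s=L$ along the reverse of $\sigma$ gives $\ola\angle q\le\pi/2$ for free.

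Next I would erect the comparison triangle $\triangle(\tilde p\tilde x\tilde q)\subset\Sph^2$ with sides $a=d(p,x)\le\pi/2$, $b=d(p,q)>\pi/2$, $c=L_{\rm m}(\sigma)<\pi$, the last bound coming from assumption (3). To justify the Toponogov inequalities $\ora\angle x\ge\angle\tilde x$ and $\ola\angle q\ge\angle\tilde q$, I would split $\sigma$ at its crossings of $\partial B^+_{\pi/2}(p)$, apply Lemma~\ref{2013_02_13_lem2.1} to the subsegments inside $B^+_{\pi/2}(p)\setminus\{p\}$, apply Corollary~\ref{2013_02_13_cor2.1} to those in $M\setminus\ol{B^+_{\pi/2}(p)}$, and glue the adjacent model triangles along a common radial side, exactly as in Case~4 of the proof of the Key Lemma~\ref{2013_02_17_lem3.1}. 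The conclusion is that $\angle\tilde x,\angle\tilde q\le\pi/2$.

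Finally I would apply the spherical law of cosines at $\tilde x$ and at $\tilde q$ to get $\cos b\ge\cos a\cos c$ and $\cos c\ge\cos a\cos b$, whence $\cos b\ge\cos^2 a\,\cos b$, i.e.\ $\sin^2 a\,\cos b\ge 0$. As $0<a\le\pi/2$ forces $\sin^2 a>0$, we obtain $\cos b\ge 0$, contradicting $b>\pi/2$. The ``in particular'' clause then follows from the isotopy Lemma~\ref{2013_02_17_lem3.4}: choosing $r_1\in(0,\pi/2)$ smaller than the forward injectivity radius at $p$ so that $\ol{B^+_{r_1}(p)}$ is a topological disk, the absence of critical points on $\ol{B^+_{\pi/2}(p)}\setminus B^+_{r_1}(p)$ gives a product structure, and gluing produces a topological $n$-disk. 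The main technical hurdle is the Toponogov step, because $\sigma$ straddles $\partial B^+_{\pi/2}(p)$ and neither Lemma~\ref{2013_02_13_lem2.1} nor Corollary~\ref{2013_02_13_cor2.1} applies directly; the splitting-and-gluing procedure, together with assumption (3) keeping $L_{\rm m}(\sigma)<\pi$ so that the spherical comparison triangle exists, is what makes the comparison argument go through.
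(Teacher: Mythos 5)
Your reduction to the angle bounds $\ora\angle x\le\pi/2$ and $\ola\angle q\le\pi/2$ is fine, and your outline of the splitting--gluing procedure follows the paper's own method. The gap is in the final spherical-trigonometry step, which does \emph{not} close. First, a small error: the law of cosines at $\tilde q$ (opposite side $\tilde p\tilde x$ of length $a$) gives $\cos a\ge\cos b\cos c$, not $\cos c\ge\cos a\cos b$ --- the latter would require a bound on $\angle\tilde p$, which you do not have. More seriously, even with the corrected pair
\[
\cos b\ge\cos a\cos c,\qquad\cos a\ge\cos b\cos c,
\]
no contradiction follows when $c>\pi/2$. Indeed, the spherical triangle with $a=\pi/4$, $b=0.6\pi$, $c=0.7\pi$ has $\angle\tilde x\approx 79^\circ$ and $\angle\tilde q\approx 47^\circ$, both below $\pi/2$, while $b>\pi/2$; your attempted chain $\cos b\ge\cos^2 a\,\cos b$ breaks because one must multiply an inequality by $\cos c$, which can be negative, flipping the sign. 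So the two-angle law-of-cosines argument simply does not exclude the configuration you want to rule out.

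There is also a secondary gap in the Toponogov step itself: the gluing of comparison sub-triangles in $\Sph^2$ produces, after straightening, a geodesic $\wt\eta$ whose endpoints lie at the correct radial distances but whose length $b$ satisfies only $b\le L_{\rm m}(\sigma)$; it is the \emph{straightened} triangle, not the comparison triangle $\triangle(\tilde p\tilde x\tilde q)$ with third side $L_{\rm m}(\sigma)$, that inherits angle bounds. Passing from one to the other requires a monotonicity argument that your sketch does not supply (and that itself would need the angle bound at the other vertex, making the argument circular). The paper sidesteps both difficulties by never invoking a single comparison triangle: it works directly with the broken geodesic $\wh\xi$ and its straightening $\wt\eta$ in $\Sph^2$, shows via \eqref{eq-2013_04_18} that $\wt\eta$ enters $B_{\pi/2}(\tilde p)$, has a radially tangential point, and --- upon extension --- meets the equator a second time with subtended angle $<\pi$ at $\tilde p$, which contradicts the fact that $\partial B_{\pi/2}(\tilde p)$ is a great circle (so any geodesic chord of it joins antipodal equatorial points). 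That extra geometric ingredient --- the interaction of the straightened geodesic with the equator and the cut-locus structure of $\Sph^2$ --- is exactly what your trigonometric shortcut is missing.
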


\begin{proof}
By Lemma \ref{2013_02_17_lem3.2}, 
$\partial B^+_{\frac{\pi}{2}}(p)$ has no critical point for $p$.
Suppose that there exists a critical point $x \in B^+_{\frac{\pi}{2}}(p) \setminus \{ p \}$ for $p$.
Let $q \in M$ be the same as that in Lemma \ref{2013_02_17_lem3.3} such that $d(p, q) = \rad_p$, 
and $c:[0, a] \lra M$ a unit speed minimal geodesic 
segment emanating from $q$ to $x$, where $a:= d(q,x)$. 
Then, $c ( [0,a]) \cap \partial B_{\frac{\pi}{2}}^+ (p) \not= \emptyset$. 
From the cases in the proof of Lemma \ref{2013_02_17_lem3.1} and Lemma \ref{2013_02_17_lem3.2}, 
it is sufficient to consider the case where $c ( [0,a] ) \cap \partial B_{\frac{\pi}{2}}^+ (p)$ is one point, say 
\[
\{ q_1 \}:= c ( [0,a]) \cap \partial B_{\frac{\pi}{2}}^+ (p).
\]
Since both $q = c(0)$ and $x = c(a)$ are critical points for $p$, we have 
\begin{equation}\label{2013_02_17_lem3.5-1}
\ora{\angle} (pc (0)c(a)) \le \frac{\pi}{2}, \qquad 
\ola{\angle} (pc (a)c(0)) \le \frac{\pi}{2}.
\end{equation}
Note that $c$ does not pass through $p$, 
because, by the definition of critical points, 
there exist at least two minimal segments emanating from $p$ to $x$ and 
$c$ is minimal. Now, take a subdivision $s_0 := 0 < s_1 < \cdots < s_{k -1} <s_k :=  a$ 
of $[0, a]$ such that $c(s_1)=q_1 \in \partial B^+_{\frac{\pi}{2}}(p)$ and that 
$\triangle (\ora{p c (s_{i-1})}, \ora{p c (s_i)})$ 
admits a comparison triangle 
$\wt{\triangle}^i := \triangle (\tilde{p} \wt{c (s_{i-1})} \wt{c (s_i)}) \subset \Sph^2$
for each $i = 2,3, \ldots, k$. Note that $\triangle (\ora{p c (s_0)}, \ora{p c (s_1)})$ 
admits its a comparison triangle 
$\wt{\triangle}^1 := \triangle (\tilde{p} \wt{c (s_0)} \wt{c (s_1)}) \subset \Sph^2$. 
Applying Lemma \ref{2013_02_13_lem2.1} 
to $\triangle (\ora{p c (s_{i-1})}, \ora{p c (s_i)})$ for each $i = 3, 4, \ldots, k$, 
\begin{equation}\label{2013_02_17_lem3.5-2}
\ora{\angle} c (s_{i-1})  
\ge 
\angle\big( \tilde{p} \wt{c (s_{i-1})} \wt{c (s_i)} \big), \qquad 
\ola{\angle}c (s_i)
\ge 
\angle \big( \tilde{p} \wt{c (s_i)} \wt{c (s_{i-1})} \big). 
\end{equation}
In particular, by \eqref{2013_02_17_lem3.5-1} and \eqref{2013_02_17_lem3.5-2}, we have 
\begin{equation}\label{2013_02_17_lem3.5-2.5}
\angle\big( \tilde{p} \wt{c (s_k)} \wt{c (s_{k-1})} \big) \le \frac{\pi}{2}.
\end{equation}
In cases where $i= 1,2$, it follows from the limit argument by using \cite[Proposition 2.1]{TS}, 
which is the technic in the proof of Lemma \ref{2013_02_17_lem3.1}, that 
\begin{equation}\label{2013_02_17_lem3.5-3}
\ora{\angle} c (s_0)  
\ge 
\angle\big( \tilde{p} \wt{c (s_0)} \wt{c (s_1)} \big), \qquad 
\ola{\angle}c (s_1)
\ge 
\angle \big( \tilde{p} \wt{c (s_1)} \wt{c (s_0)} \big), 
\end{equation}
and that 
\begin{equation}\label{2013_02_17_lem3.5-4}
\ora{\angle} c (s_1)  
\ge 
\angle\big( \tilde{p} \wt{c (s_1)} \wt{c (s_2)} \big), \qquad 
\ola{\angle}c (s_2)
\ge 
\angle \big( \tilde{p} \wt{c (s_2)} \wt{c (s_1)} \big). 
\end{equation}
In particular, by \eqref{2013_02_17_lem3.5-1} and \eqref{2013_02_17_lem3.5-3}, we have 
\begin{equation}\label{2013_02_21_lem3.5-1}
\angle\big( \tilde{p} \wt{c (s_0)} \wt{c (s_1)} \big) \le \frac{\pi}{2}.
\end{equation}
Starting from $\wt{\triangle}^1$, we inductively draw a geodesic triangle 
$\wt{\triangle}^{i + 1} \subset \Sph^2$ which is adjacent to $\wt{\triangle}^i$ 
so as to have a common side
$\tilde{p} \wt{c (s_i)}$, where 
$0 := \theta (\wt{c (s_0)}) \le \theta (\wt{c (s_1)}) \le \cdots \le \theta (\wt{c (s_k)})$.
Since $\ola{\angle} c (s_i) + \ora{\angle} c (s_i) \le \pi$ for each $i = 1,2, \ldots, k-1$,  
we obtain, by \eqref{2013_02_17_lem3.5-2}, \eqref{2013_02_17_lem3.5-3}, \eqref{2013_02_17_lem3.5-4}, 
\begin{equation}\label{2013_02_17_lem3.5-5}
\angle\big( \tilde{p} \wt{c (s_i)} \wt{c (s_{i-1})} \big) + 
\angle\big( \tilde{p} \wt{c (s_i)} \wt{c (s_{i+1})} \big) 
\le \pi.
\end{equation}
Let $\wh{\xi}:[0, L_{\rm m} (c)] \lra \Sph^2$ denote the broken geodesic segment 
consisting of minimal geodesic segments from $\wt{c (s_{i-1})}$ to $\wt{c (s_i)}$,
$i=1,2,\ldots,k$.
Set $\wh{\xi} (r) := (t(\wh{\xi} (r)), \theta (\wh{\xi} (r)))$. 
By \eqref{2013_02_17_lem3.5-5}, we have the unit speed geodesic 
$\wt{\eta} :[0, b] \lra \Sph^2$ emanating from $\wt{\eta} (0) = \wt{c (s_0)}$ to $\wt{\eta} (b) = \wt{c (s_k)}$ 
and passing under $\wh{\xi} ( [0, L_{\rm m} (c)] )$, i.e., 
$\theta (\wt{\eta}) \in [0, \theta (\wt{c (s_k)})]$ on $[0, b]$ 
and $t(\wh{\xi} (r)) > t(\wt{\eta} (u))$ for all $(r,u) \in (0,L_{\rm m} (c)) \times (0, b)$ with 
$\theta (\wh{\xi} (r)) = \theta (\wt{\eta} (u))$. 
Since $b \le L_{\rm m} (c) < \pi$ by \eqref{2013_05_19_Sec3.1}, 
$\wt{\eta}$ is minimal with $\angle (\dot{\wt{\gamma}}(0), \dot{\wt{\sigma}}(0)) < \pi$, 
where $\wt{\gamma}$ and $\wt{\sigma}$ denote minimal geodesic segments (i.e., sub-meridians) 
emanating from $\tilde{p}$ to $\wt{c(s_0)}$ and from $p$ to $\wt{c(s_k)}$, respectively. 
Since $\wt{\eta}$ lives under $\wh{\xi} ( [0, L_{\rm m} (c)] )$, we have, 
by \eqref{2013_02_17_lem3.5-2.5} and \eqref{2013_02_21_lem3.5-1}
\begin{equation}\label{eq-2013_04_18}
\angle (\dot{\wt{\eta}}(0), - \dot{\wt{\gamma}}(\tilde{d}(\tilde{p}, \wt{c(s_0)}))) \le \frac{\pi}{2}, \qquad 
\angle (\dot{\wt{\eta}}(b), \dot{\wt{\sigma}}(\tilde{d}(\tilde{p}, \wt{c(s_k)}))) \le \frac{\pi}{2}.
\end{equation}
Since $\tilde{d} (\tilde{p}, \wt{c(s_0)}) > \pi/2 > \tilde{d} (\tilde{p}, \wt{c(s_k)})$, 
there exists $b_0 \in (0, b)$ such that $\wt{\eta}(b_0) \in \partial B_{\frac{\pi}{2}}(\tilde{p})$. 
Note that the three points $\tilde{p}, \wt{\eta} (0)$, $\wt{\eta} (b)$ are not contained 
in the same great circle, because of \eqref{eq-2013_04_18}. Let $\ol{\eta}: [0, \pi] \lra \Sph^2$ be 
an extension of $\wt{\eta}$ to the antipodal point $\wt{c(s_0)}_\pi$ to $\wt{c (s_0)} = \wt{\eta} (0)$, 
and set $\ol{\eta} (u):=(t(u), \theta (u))$. 
Since $\Sph^2 \setminus B_{u}(\ol{\eta} (0))$ is strictly convex 
for all $u \in (\pi/2,\pi)$ (by the same proof of Lemma \ref{2013_02_16_lem3.1}), 
$\angle (\dot{\ol{\eta} } (\rad_p), (\partial / \partial t)|_{\ol{\eta}(\rad_p)} ) > \pi / 2$ holds.
This implies 
\begin{equation}\label{eq1_2013_05_20}
t'(\rad_p) <0, 
\end{equation}
where note that $\ol{\eta}$ emanates from $\wt{c(s_0)}$ to $\wt{c(s_0)}_\pi$. 
Since $\ol{\eta} ((b_0, \rad_p)) \subset B_{\frac{\pi}{2}}(\tilde{p})$ and 
$f' (t) = \cos t> 0$ on $(0, \pi/2)$, it follows from \cite[(7.1.15)]{SST} that 
\begin{equation}\label{eq2_2013_05_20}
t''(u) = f (t(u))f'(t(u)) \theta'( t(u))^2 >0
\end{equation}
holds on $(b_0, \rad_p)$. Hence, by \eqref{eq1_2013_05_20} and \eqref{eq2_2013_05_20}, 
$t(u)$ is decreasing on $[b_0, \rad_p]$. Since $b \in (b_0, \rad_p)$,  
\[
\angle (\dot{\wt{\eta}}(b), \dot{\wt{\sigma}}( t(b) )) > \frac{\pi}{2}.
\] 
This contradicts the right inequality in \eqref{eq-2013_04_18}. Therefore, $\ol{B_{\frac{\pi}{2}}^+(p)} \setminus \{ p \}$ has 
no critical point for $p$. 
By Lemma \ref{2013_02_17_lem3.4}, $\ol{B^+_{\frac{\pi}{2}}(p)}$ is a topological disk.
$\qedd$
\end{proof}

By Lemmas \ref{2013_02_17_lem3.3} and \ref{2013_02_17_lem3.5}, 
$M$ is homeomorphic to $\Sph^n$.
$\qedd$

\section{Appendix: Proof of Lemma \ref{2013_02_13_lem2.1}}\label{sec4}

Let $(M,F,p)$ be a forward complete, connected $C^\infty$-Finsler manifold with a base point 
$p \in M$, and let $d$ denote its distance function and $\Cut (p)$ the cut locus of $p$. 
Set $B^-_r(x):=\{ y \in M \,|\, d(y,x)<r \}$. 
Take a point $q \in M \setminus (\Cut (p) \cup \{p\})$ and small $r>0$ such that
$B^-_{2r}(q) \cap (\Cut (p) \cup \{p\} )= \emptyset$ 
and that $B_r^{\pm}(q):=B^+_r(q) \cap B^-_r(q)$ is geodesically convex 
(i.e., any minimal geodesic joining two points in $B_r^{\pm}(q)$ is contained in $B_r^{\pm}(q)$).
Given a unit speed minimal geodesic segment $c:(-\ve,\ve) \lra B_r^{\pm}(q)$,
we consider the $C^{\infty}$-variation 
\[
\varphi (t, s) := \exp_{p}\left( \frac{t}{l} \exp_p^{-1}\big( c(s) \big) \right),
\qquad (t,s) \in [0,l] \times (-\ve,\ve),
\]
where $l:=d(p,c(0))$.
Since $x:=c(0) \not\in \Cut(p)$, there is a unique minimal geodesic segment
$\gamma:[0,l] \lra M$ emanating from $p$ to $x$.
By setting 
$J(t) := \frac{\partial \varphi}{\partial s} (t, 0)$, 
we get the Jacobi field $J$ along $\gamma$ with $J(0) = 0$ and $J (l) = \dot{c}(0)$. 
Note that $J(t) \not= 0$ on $(0, l\,]$ from the minimality of $\gamma$, and that  
\begin{equation}\label{2013_02_14_J}
J^{\perp} (t) := J(t) - \frac{g_{\dot{\gamma}(l)} (\dot{\gamma}(l),\dot{c}(0))}{l}t\dot{\gamma}(t), 
\qquad t \in [0, l], 
\end{equation}
is the $g_{\dot{\gamma}}$-orthogonal component $J^{\perp}(t)$
to $\dot{\gamma}(t)$ (see \cite[Lemma 3.2]{KOT1}). 
Moreover, since $\gamma$ is unique, 
it follows from the proof of \cite[Lemma 2.2]{KOT1} that 
\begin{equation}\label{2013_02_14_eq2.1}
-\cos \ora{\angle}\big( pxc(\ve) \big) =\cos \ola{\angle}\big( pxc(-\ve) \big)
=\lambda^{-1} g_{\dot{\gamma}(l)} \big( \dot{\gamma}(l),\dot{c}(0) \big),
\end{equation}
where $\lambda:=\max\{1,F(-\dot{c}(0))\}$. 
Hence, $\pi-\ora{\angle}(pxc(\ve))=\ola{\angle}(pxc(-\ve))$. 
In the following discussion, we set
\begin{equation}\label{2013_02_13_eq2.1}
\omega:=\pi-\ora{\angle}(pxc(\ve))=\ola{\angle}(pxc(-\ve)).
\end{equation}

Hereafter, we assume that the radial flag curvature of $(M,F,p)$
is bounded below by $1$. Hence, its model surface is the unit sphere $(\Sph^2, \tilde{p})$ 
with its metric $d\tilde{s}^2 = dt^2 + f(t)^2d \theta^2$, $(t,\theta) \in (0,\pi) \times \Sph_{\tilde{p}}^1$, 
such that $f(t) = \sin t$. 
For small $\delta>0$ with $\delta <1$, we set 
\[
f_{\delta} (t) := \frac{1}{\sqrt{1- \delta}} \sin (\sqrt{1-\delta}\,t) 
\]
on $[0, \pi / \sqrt{1- \delta}\,]$. 
Then, $f_{\delta}$ satisfies $f''_{\delta} + (1 - \delta) f_{\delta}= 0$ 
with $f_{\delta}(0) = 0$, $f_{\delta}'(0) = 1$. 
Thus, we have a new sphere $(\Sph^2_\delta, \tilde{o})$ 
with the metric $d\tilde{s}^2_{\delta} = dt^2 + f_{\delta}(t)^{2} d\theta^2$
on $(0, \pi / \sqrt{1- \delta}\,) \times \Sph_{\tilde{o}}^{1}$.
Since the curvature $1-\delta$ of $(\Sph^2_\delta, \tilde{o})$ is less than $1$,
we may also employ $(\Sph^2_\delta, \tilde{o})$ as a reference surface for $M$.

Let $c$, $x=c(0)$, $\gamma$ and $l=d(p,x)$ be the same in the above.
Fix a point $\tilde{x} \in \Sph^2_\delta$ with $ \tilde{d}_\delta(\tilde{o}, \tilde{x})= l$,
where $\tilde{d}_\delta$ denotes the distance function induced from $d\tilde{s}^2_{\delta}$. 
Let $\wt{\gamma} : [0,l] \lra \Sph^2_\delta$ be the minimal geodesic segment
from $\tilde{o}$ to $\tilde{x}$, and take a unit parallel vector field $\wt{E}$
along $\wt{\gamma}$ orthogonal to $\dot{\wt{\gamma}}$.
Define the Jacobi field $\wt{X}$ along $\wt{\gamma}$ by 
\begin{equation}\label{2013_02_14_eq2.2}
\wt{X}(t) := \frac{1}{f_{\delta}(l)} f_{\delta}(t) \wt{E}(t). 
\end{equation}

\begin{lemma}{\rm (\cite[Lemma 3.4]{KOT1})}\label{2013_02_13_lem2.2}
For any Jacobi field $X$ along $\gamma$ which is $g_{\dot{\gamma}}$-orthogonal to
$\dot{\gamma}$ and satisfies $X(0) = 0$ and $g_{\dot{\gamma}(l)} (X (l), X(l)) = 1$, we have
\[
\wt{I}_{l} (\wt{X}, \wt{X}) 
\ge I_{l}(X, X)
+ \frac{\delta}{f_{\delta}(l)^{2}} \int_{0}^{l} f_{\delta}(t)^{2}\,dt.
\]
Here, $I_l$ and $\wt{I}_{l}$ denote the index forms with respect to 
$\gamma|_{[0, \,l]}$ and $\wt{\gamma}|_{[0, \,l]}$, respectively. 
\end{lemma}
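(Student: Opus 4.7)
The plan is to prove the inequality by adapting the classical Rauch comparison argument to the Finsler setting, producing a test field $Y$ along $\gamma$ that uses the same radial profile $f_{\delta}(t)/f_{\delta}(l)$ as $\wt{X}$. Concretely, I would choose a unit, $g_{\dot{\gamma}}$-parallel, $g_{\dot{\gamma}}$-orthogonal vector field $E$ along $\gamma$ (using the Chern connection with reference vector $\dot{\gamma}$, so that $D^{\dot{\gamma}}_{\dot{\gamma}} E \equiv 0$) with $E(l) = X(l)$, and set $Y(t) := \frac{f_{\delta}(t)}{f_{\delta}(l)} E(t)$. Then $Y(0) = 0$ and $Y(l) = X(l)$, so the standard index lemma (applicable because the minimality of $\gamma$ rules out interior conjugate points) yields $I_{l}(X, X) \le I_{l}(Y, Y)$.

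The rest is a direct comparison between two explicit integrals. Since $E$ is parallel and of unit $g_{\dot{\gamma}}$-norm, one computes $g_{\dot{\gamma}}(D^{\dot{\gamma}}_{\dot{\gamma}} Y, D^{\dot{\gamma}}_{\dot{\gamma}} Y) = f_{\delta}'(t)^{2}/f_{\delta}(l)^{2}$ and $g_{\dot{\gamma}}(Y, Y) = f_{\delta}(t)^{2}/f_{\delta}(l)^{2}$; because $Y$ is $g_{\dot{\gamma}}$-orthogonal to the unit vector $\dot{\gamma}$, the radial flag curvature hypothesis $K_{M}(\dot{\gamma}, w) \ge 1$ gives $g_{\dot{\gamma}}(R^{\dot{\gamma}}(Y, \dot{\gamma})\dot{\gamma}, Y) \ge f_{\delta}(t)^{2}/f_{\delta}(l)^{2}$. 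Hence
\[
I_{l}(Y, Y) \le \frac{1}{f_{\delta}(l)^{2}} \int_{0}^{l} \bigl( f_{\delta}'(t)^{2} - f_{\delta}(t)^{2} \bigr)\,dt.
\]
On the model $(\Sph^{2}_{\delta}, \tilde{o})$ the Gauss curvature equals $1 - \delta$, so the analogous computation for $\wt{X} = (f_{\delta}/f_{\delta}(l))\wt{E}$ with $\wt{E}$ parallel and unit produces the \emph{exact} identity
\[
\wt{I}_{l}(\wt{X}, \wt{X}) = \frac{1}{f_{\delta}(l)^{2}} \int_{0}^{l} \bigl( f_{\delta}'(t)^{2} - (1-\delta) f_{\delta}(t)^{2} \bigr)\,dt.
\]
Subtracting yields precisely the extra term $\frac{\delta}{f_{\delta}(l)^{2}} \int_{0}^{l} f_{\delta}(t)^{2}\,dt$, and chaining with $I_{l}(X, X) \le I_{l}(Y, Y)$ delivers the asserted bound.

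The main conceptual obstacle is purely Finslerian: all covariant derivatives, parallel transports, and the curvature operator $R^{\dot{\gamma}}$ depend on a choice of reference vector, so I must fix this reference uniformly as $\dot{\gamma}$ along the geodesic and verify that the index form and the index lemma are well posed in this formulation. The tangent curvature $\cT_{M}$ does \emph{not} enter here, because the entire calculation takes place along the single geodesic $\gamma$ with the single reference vector $\dot{\gamma}$; the radial flag curvature bound alone controls $g_{\dot{\gamma}}(R^{\dot{\gamma}}(Y, \dot{\gamma})\dot{\gamma}, Y)$, which is the only curvature quantity appearing in $I_{l}(Y, Y)$.
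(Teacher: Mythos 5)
Your proof is correct and is the standard Rauch-type index comparison argument: compare $X$ against the transplant $Y = (f_{\delta}/f_{\delta}(l))E$ via the index lemma, then bound $I_l(Y,Y)$ using the radial flag curvature hypothesis and compute $\wt{I}_l(\wt{X},\wt{X})$ explicitly from the constant curvature $1-\delta$. The present paper only cites this lemma from \cite[Lemma~3.4]{KOT1} without reproducing the proof, but the argument there is precisely this comparison, so your route is essentially the same; your observations that the reference vector must be fixed as $\dot{\gamma}$ throughout and that the tangent curvature $\cT_M$ plays no role at this stage are both accurate.
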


Fix a geodesic $\tilde{c}:(-\ve,\ve) \lra \Sph^2_{\delta}$
with $\tilde{c}(0)=\tilde{x}$ such that
\[
\angle\big( \dot{\wt{\gamma}}(l),\dot{\tilde{c}}(0) \big) =\omega,
 \qquad \|\dot{\tilde{c}}\|=\lambda:=\max\left\{ 1,F\!\left( -\dot{c}(0) \big) \right. \right\},
\]
where $\omega$ is as that in \eqref{2013_02_13_eq2.1}. 
Consider the geodesic variation
\[
\tilde{\varphi}(t,s):=\exp_{\tilde{o}}\left( \frac{t}{l}\exp^{-1}_{\tilde{o}}\big( \tilde{c}(s) \big) \right),
 \qquad (t,s) \in [0,l] \times (-\ve,\ve).
 \]
By setting $\wt{J}(t) := \frac{\partial \wt{\varphi}}{\partial s} (t, 0)$, 
we get the Jacobi field $\wt{J}$ 
along $\wt{\gamma}$ with $\wt{J}(0) = 0$ and $\wt{J}(l) = \dot{\tilde{c}}(0)$. 
And the Jacobi field 
\[
\wt{J}^{\perp} (t) 
:= 
\wt{J}(t) 
- 
\frac{\langle 
\dot{\wt{\gamma}}(l), \dot{\tilde{c}}(0)
\rangle}{l} t \dot{\wt{\gamma}}(t)
\]
along $\wt{\gamma}$ is orthogonal to $\dot{\wt{\gamma}}(t)$ on $[0,l]$. 

\begin{lemma}\label{2013_02_13_lem2.3}
Assume that 
\begin{enumerate}[$(1)$]
\item
$B^-_{2r}(q) \subset B^+_{\frac{\pi}{2}}(p)$,
\item 
$F(v)^2 \ge g_{\dot{\gamma}(l)}(v,v)$ for all $v \in T_xM$.
\end{enumerate}
If $\omega \in (0, \pi)$, then there exists $\delta_{1}:= \delta_{1}(f,r)> 0$ such that, 
for any $\delta \in (0, \delta_{1})$, 
\[
\wt{I}_{l} (\wt{J}^{\perp}, \wt{J}^{\perp}) 
-I_{l} (J^{\perp}, J^{\perp}) 
\ge \delta \,C_1\,g_{\dot{\gamma}(l)}(J^{\perp}(l), J^{\perp}(l))>0
\]
holds, where  
$C_1 := \frac{1}{2 f(l_{0})^{2}} \int_{0}^{l_{0}} f(t)^{2} \,dt$ and $l_0:=d(p,q)$. 
\end{lemma}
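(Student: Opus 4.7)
The strategy is to apply the index-form comparison in Lemma~\ref{2013_02_13_lem2.2} to suitably normalized Jacobi fields, rescale back to $J^{\perp}$ and $\wt{J}^{\perp}$, and then choose $\delta_{1}$ small enough that the model-perturbation term dominates $\delta C_{1}$ times the $g_{\dot\gamma(l)}$-length squared of $J^{\perp}(l)$. First, set $a:=\sqrt{g_{\dot\gamma(l)}(J^{\perp}(l),J^{\perp}(l))}$ and $Y:=a^{-1}J^{\perp}$. Then $Y$ is a $g_{\dot\gamma}$-orthogonal Jacobi field along $\gamma$ with $Y(0)=0$ and $g_{\dot\gamma(l)}(Y(l),Y(l))=1$, so Lemma~\ref{2013_02_13_lem2.2} applies:
\[
\wt{I}_{l}(\wt{Y},\wt{Y}) \ge I_{l}(Y,Y) + \frac{\delta}{f_{\delta}(l)^{2}}\int_{0}^{l}f_{\delta}(t)^{2}\,dt.
\]
On the model side, $\wt{J}^{\perp}$ is orthogonal to $\dot{\wt\gamma}$ with $\langle \wt{J}^{\perp}(l),\wt{J}^{\perp}(l)\rangle=\lambda^{2}\sin^{2}\omega$ (from $\|\dot{\tilde{c}}(0)\|=\lambda$ and $\angle(\dot{\wt\gamma}(l),\dot{\tilde{c}}(0))=\omega$), so choosing the unit parallel field $\wt{E}$ in the direction of $\wt{J}^{\perp}(l)$ gives $\wt{J}^{\perp}=(\lambda\sin\omega)\wt{Y}$ with $\wt{Y}$ as in~\eqref{2013_02_14_eq2.2}.

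Rescaling the previous display by $(\lambda\sin\omega)^{2}$ on the left and by $a^{2}$ on the right yields
\[
\wt{I}_{l}(\wt{J}^{\perp},\wt{J}^{\perp}) - I_{l}(J^{\perp},J^{\perp}) \ge \big[(\lambda\sin\omega)^{2}-a^{2}\big]\,I_{l}(Y,Y) + (\lambda\sin\omega)^{2}\,\frac{\delta}{f_{\delta}(l)^{2}}\int_{0}^{l}f_{\delta}(t)^{2}\,dt.
\]
The decisive algebraic input uses~\eqref{2013_02_14_eq2.1} to identify $g_{\dot\gamma(l)}(\dot\gamma(l),\dot{c}(0))=\lambda\cos\omega$, which together with $\lambda\ge 1$, $F(\dot c(0))=1$, and assumption~(2) gives
\[
(\lambda\sin\omega)^{2} - a^{2} \;=\; \lambda^{2} - g_{\dot\gamma(l)}\big(\dot c(0),\dot c(0)\big) \;\ge\; \lambda^{2}-F(\dot c(0))^{2} \;=\; \lambda^{2}-1 \;\ge\; 0.
\]
Furthermore, assumption~(1) combined with $c(0)\in B_{r}^{\pm}(q)$ forces $l<\pi/2$; a standard Riccati/Rauch comparison against the unit sphere, using the radial flag curvature lower bound $K_{M}\ge 1$ along $\gamma$, then gives $I_{l}(Y,Y)\ge\cot l>0$. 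Consequently the first summand is non-negative, and a second use of $(\lambda\sin\omega)^{2}\ge a^{2}$ in the second summand produces
\[
\wt{I}_{l}(\wt{J}^{\perp},\wt{J}^{\perp}) - I_{l}(J^{\perp},J^{\perp}) \ge a^{2}\,\frac{\delta}{f_{\delta}(l)^{2}}\int_{0}^{l}f_{\delta}(t)^{2}\,dt.
\]

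It then remains to choose $\delta_{1}=\delta_{1}(f,r)$ so that $f_{\delta}(l)^{-2}\int_{0}^{l}f_{\delta}(t)^{2}dt\ge C_{1}$ uniformly for $\delta\in(0,\delta_{1})$ and $l\in(l_{0}-r,l_{0}+r)$. Assumption~(1) gives $l_{0}<\pi/2$, so $f(l_{0})=\sin l_{0}>0$, and the map $(\delta,l)\mapsto f_{\delta}(l)^{-2}\int_{0}^{l}f_{\delta}(t)^{2}dt$ is continuous at $(0,l_{0})$ with value $f(l_{0})^{-2}\int_{0}^{l_{0}}f(t)^{2}dt=2C_{1}$. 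The factor $1/2$ built into $C_{1}$ thus absorbs both the $\delta$-perturbation and the spread of $l$ over $B_{r}^{\pm}(q)$, producing the desired $\delta_{1}$ by an elementary compactness argument. Strict positivity of the right-hand side then follows from $a>0$, which is guaranteed by the non-degeneracy implicit in $\omega\in(0,\pi)$ in the application.

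The main obstacle I expect is justifying $I_{l}(Y,Y)\ge\cot l>0$ cleanly in the Finsler setting: one must show that for the \emph{radial} Jacobi field $Y$ with $Y(0)=0$, the boundary term $g_{\dot\gamma(l)}(\nabla_{\dot\gamma}Y(l),Y(l))$ is bounded below by the spherical model value $\cot l$ using only the radial flag curvature lower bound along $\gamma$, and $l<\pi/2$ is what keeps this bound in the positive regime. Everything else reduces to a routine algebraic rescaling and an elementary continuity argument on the explicit integral $\int_{0}^{l}f_{\delta}(t)^{2}dt$.
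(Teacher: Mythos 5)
Your setup, notation, and rescaling algebra are correct, and the algebraic identity
$(\lambda\sin\omega)^{2}-a^{2}=\lambda^{2}-g_{\dot\gamma(l)}(\dot c(0),\dot c(0))\ge 0$
is exactly what the paper uses. The continuity argument for extracting $\delta_{1}(f,r)$ at the end is also sound. However, there is a genuine gap in the middle: your decomposition forces you to show that the first summand $\bigl[(\lambda\sin\omega)^{2}-a^{2}\bigr]I_{l}(Y,Y)$ is non-negative, which amounts to $I_{l}(Y,Y)\ge 0$, and you justify this by claiming ``a standard Riccati/Rauch comparison using $K_{M}\ge 1$ gives $I_{l}(Y,Y)\ge\cot l>0$.'' This inequality is the \emph{wrong} direction: a curvature \emph{lower} bound $K_{M}\ge 1$ makes Jacobi fields more concave, so the index form comparison gives $I_{l}(Y,Y)\le\cot l$, not $\ge$. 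In fact $I_{l}(Y,Y)$ can be \emph{negative} with $l<\pi/2$: take constant curvature $K=4$ and $l=\pi/3<\pi/2$; then $\sqrt{K}\,l=2\pi/3<\pi$ so there is no conjugate point, yet $I_{l}(Y,Y)=\sqrt{K}\cot(\sqrt{K}\,l)=2\cot(2\pi/3)=-2/\sqrt{3}<0$. Nothing in the hypotheses rules this out.

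The paper sidesteps this entirely. Instead of multiplying the index--form inequality of Lemma~\ref{2013_02_13_lem2.2} by $(\lambda\sin\omega)^{2}$, it multiplies by $-g_{\dot\gamma(l)}(J^{\perp}(l),J^{\perp}(l))$ and then adds $\wt{I}_{l}(\wt{J}^{\perp},\wt{J}^{\perp})=(\lambda\sin\omega)^{2}\,\wt{I}_{l}(\wt{X},\wt{X})$. This produces the first summand in the form $\bigl[(\lambda\sin\omega)^{2}-a^{2}\bigr]\wt{I}_{l}(\wt{X},\wt{X})$, where $\wt{I}_{l}(\wt{X},\wt{X})$ is the \emph{model-side} index form, which is explicitly
$\wt{I}_{l}(\wt{X},\wt{X})=\sqrt{1-\delta}\,\cot\bigl(\sqrt{1-\delta}\,l\bigr)>0$
because assumption~(1) gives $l<\pi/2<\pi/(2\sqrt{1-\delta})$. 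Its positivity is immediate from the explicit model geometry, with no recourse to curvature comparisons in $M$ at all. You should replace your appeal to $I_{l}(Y,Y)\ge\cot l$ by this rearrangement; the rest of your argument then goes through, up to the cosmetic point that the perturbation coefficient becomes $g_{\dot\gamma(l)}(J^{\perp}(l),J^{\perp}(l))\,\delta$ rather than $(\lambda\sin\omega)^{2}\delta$, which is still exactly what the lemma asserts.
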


\begin{proof}
By the assumption (2) in this lemma, 
\begin{equation}\label{2013_02_13_lem2.3-10}
\lambda^2 \ge g_{\dot{\gamma}(l)}(\dot{c}(0),\dot{c}(0)).
\end{equation}
Indeed, \eqref{2013_02_13_lem2.3-10} is immediate in the case where $\lambda =1$. 
If $\lambda = F(-\dot{c}(0))$, then
\[
1 \ge g_{\dot{\gamma}(l)}\left( \frac{-\dot{c}(0)}{F(-\dot{c}(0))}, \frac{-\dot{c}(0)}{F(-\dot{c}(0))} \right) 
= \frac{1}{F(-\dot{c}(0))^2}g_{\dot{\gamma}(l)}(\dot{c}(0),\dot{c}(0)).
\]

By \eqref{2013_02_14_eq2.1} and \eqref{2013_02_13_eq2.1}, 
$g_{\dot{\gamma}(l)} \big(  \dot{\gamma}(l),\dot{c}(0) \big)= \lambda \cos \omega$.
Then, $\wt{J}^{\perp} (l) = \pm \lambda \sin \omega \cdot \wt{X}(l)$ holds,
where $\wt{X}$ is the same as that in \eqref{2013_02_14_eq2.2}.
Since both $\wt{J}^{\perp}$ and $\wt{X}$ are Jacobi fields on $\Sph^2_\delta$, 
$\wt{J}^{\perp}(t) = \pm \lambda \sin \omega \cdot \wt{X}(t)$ on $[0, l]$. 
Hence
\begin{equation}\label{2013_02_13_lem2.3-2}
\wt{I}_{l} (\wt{J}^{\perp}, \wt{J}^{\perp}) 
= (\lambda \sin\omega)^2 \wt{I}_{l} (\wt{X}, \wt{X}).
\end{equation}
On the other hand, it follows from \eqref{2013_02_14_J} and \eqref{2013_02_13_lem2.3-10} that 
\[
g_{\dot{\gamma}(l)} \big( J^{\perp}(l),J^{\perp}(l) \big) = 
g_{\dot{\gamma}(l)} \big( \dot{c} (0), \dot{c} (0) \big) -(\lambda \cos \omega)^2
\le (\lambda \sin \omega)^2.
\]
Then, we get a constant 
$a:=(\lambda \sin \omega)^2 - g_{\dot{\gamma}(l)} \big( J^{\perp}(l),J^{\perp}(l) \big) \ge 0$. 
Since $g_{\dot{\gamma}(l)} \big( J^{\perp}(l),J^{\perp}(l) \big) >0$ for $\omega \in (0, \pi)$, 
we have, by Lemma \ref{2013_02_13_lem2.2}, 
\[
\wt{I}_l(\wt{X},\wt{X}) 
\ge \frac{I_l(J^{\perp},J^{\perp})}{g_{\dot{\gamma}(l)} \big( J^{\perp}(l),J^{\perp}(l) \big)}
 +\frac{\delta}{f_{\delta}(l)^2} \int_0^l f_{\delta}(t)^2 \,dt, 
\]
hence
\begin{align}\label{2013_02_13_lem2.3-3}
- I_l(J^{\perp},J^{\perp}) 
&\ge
-g_{\dot{\gamma}(l)} \big( J^{\perp}(l),J^{\perp}(l) \big) 
\left\{
\wt{I}_l(\wt{X},\wt{X}) 
-\frac{\delta}{f_{\delta}(l)^2} \int_0^l f_{\delta}(t)^2 \,dt
\right\}\\
&= \{a -(\lambda \sin \omega)^2\}\wt{I}_l(\wt{X},\wt{X}) 
+ 
\frac{\delta \cdot g_{\dot{\gamma}(l)} \big( J^{\perp}(l),J^{\perp}(l) \big) }{f_{\delta}(l)^2} \int_0^l f_{\delta}(t)^2 \,dt.\notag
\end{align}
By \eqref{2013_02_13_lem2.3-2} and \eqref{2013_02_13_lem2.3-3}, 
\begin{align*}
\wt{I}_l(\wt{J}^{\perp},\wt{J}^{\perp}) -I_l(J^{\perp},J^{\perp})
&\ge a\wt{I}_l(\wt{X},\wt{X})
 +\frac{\delta \cdot g_{\dot{\gamma}(l)} \big( J^{\perp}(l),J^{\perp}(l) \big) }{f_{\delta}(l)^2} \int_0^l f_{\delta}(t)^2\\
&\ge \frac{\delta \cdot g_{\dot{\gamma}(l)} \big( J^{\perp}(l),J^{\perp}(l) \big) }{f_{\delta}(l)^2} \int_0^l f_{\delta}(t)^2, 
\end{align*}
where note that $a \ge 0$, and that 
$
\wt{I}_l(\wt{X},\wt{X})
=\frac{\sqrt{1-\delta}}{\tan(\sqrt{1 -\delta}l)}>0$,
because $l < \pi/2 < \pi / 2 \sqrt{1-\delta}$ by the assumption (1) in this lemma. 
Since $|l-l_0| \le \max\{d(q,x),d(x,q)\}<r$, and since $l,l_0< \pi/2$ (from the (1)),
taking smaller $\delta_1(f,r)>0$ if necessary, we get the desired assertion in this lemma 
for all $\delta \in (0, \delta_{1})$. 
$\qedd$
\end{proof}

\begin{lemma}\label{2013_02_14_lem2.1}
Assume that 
\begin{enumerate}[$(1)$]
\item
$B^-_{2r}(q) \subset B^+_{\frac{\pi}{2}}(p)$,
\item 
$F(v)^2 \ge g_{\dot{\gamma}(l)}(v,v)$ for all $v \in T_xM$, 
\item
$\cT_M(\dot{\gamma}(l), \dot{c}(0)) = 0$.
\end{enumerate}
For each $\delta \in (0, \delta_{1})$, $\theta \in (0, \pi/2)$, if 
$\omega \in [\theta, \pi -\theta]$, then  
there exists $\ve' :=\ve'(M,l,f,\ve,\delta,\theta)$ $\in (0, \ve)$ such that
$L(s) \le \wt{L}(s)$ holds for all $s \in [-\ve', \ve']$. Here, $L(s):=d(p,c(s))$ 
and $\wt{L}(s):=\tilde{d}_{\delta}(\tilde{o},\tilde{c}(s))$.
\end{lemma}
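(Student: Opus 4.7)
The plan is to set $h(s) := \wt L(s) - L(s)$ and prove $h(s) \ge 0$ on a small symmetric neighborhood of $s=0$ by matching the zeroth and first Taylor coefficients of $L$ and $\wt L$ at $s=0$ and establishing strict positivity of $h''(0)$. Since $B_{2r}^-(q) \cap (\Cut(p) \cup \{p\}) = \emptyset$, the formula $L(s) = F(\exp_p^{-1}(c(s)))$ shows that $L$ is $C^\infty$ near $0$, and $\wt L$ is smooth via the Riemannian exponential map on $\Sph^2_\delta$. Trivially $h(0) = 0$. For $h'(0)$, the first variation formula of arc length applied to $\varphi$---whose variation field along $\gamma$ is the Jacobi field $J$ with $J(0)=0$ and $J(l)=\dot c(0)$---combined with \eqref{2013_02_14_eq2.1} and \eqref{2013_02_13_eq2.1}, gives $L'(0) = g_{\dot\gamma(l)}(\dot\gamma(l), \dot c(0)) = \lambda\cos\omega$. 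The analogous Riemannian first variation on $\Sph^2_\delta$, together with the defining identities $\angle(\dot{\wt\gamma}(l), \dot{\wt c}(0)) = \omega$ and $\|\dot{\wt c}\| = \lambda$, yields $\wt L'(0) = \lambda\cos\omega$; hence $h'(0) = 0$.

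For the second derivative, the Finsler second variation of arc length applied to $\varphi$ yields
\[
L''(0) = I_l(J^\perp, J^\perp) + g_{\dot\gamma(l)}\big( D^{\dot\gamma}_{\dot c}\dot c(0), \dot\gamma(l) \big),
\]
with no boundary contribution at $t=0$ since $J(0)=0$. Because $c$ is a geodesic, $D^{\dot c}_{\dot c}\dot c(0) = 0$, so the defining identity
\[
\cT_M(\dot\gamma(l), \dot c(0)) = g_{\dot\gamma(l)}\big( D^{\dot c}_{\dot c}\dot c(0) - D^{\dot\gamma}_{\dot c}\dot c(0), \dot\gamma(l) \big)
\]
together with assumption $(3)$ forces $g_{\dot\gamma(l)}(D^{\dot\gamma}_{\dot c}\dot c(0), \dot\gamma(l)) = 0$, and hence $L''(0) = I_l(J^\perp, J^\perp)$. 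Since $\wt c$ is a geodesic in the Riemannian $\Sph^2_\delta$, the analogous computation gives $\wt L''(0) = \wt I_l(\wt J^\perp, \wt J^\perp)$. Assumption $(1)$ and $\omega \in [\theta, \pi-\theta] \subset (0, \pi)$ place us in the setting of Lemma \ref{2013_02_13_lem2.3}, which therefore produces
\[
h''(0) = \wt L''(0) - L''(0) \ge \delta\, C_1\, g_{\dot\gamma(l)}\big( J^\perp(l), J^\perp(l) \big) > 0.
\]

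With $h(0) = h'(0) = 0$ and $h''(0) > 0$, continuity of $h''$ supplies $\ve' = \ve'(M, l, f, \ve, \delta, \theta) \in (0, \ve)$ such that $h''(s) \ge h''(0)/2$ for $|s| \le \ve'$, and Taylor's theorem with remainder then yields $h(s) \ge \tfrac{1}{4} h''(0) s^2 \ge 0$ on $[-\ve', \ve']$, i.e., $L(s) \le \wt L(s)$ there. The main obstacle is the second-variation bookkeeping: in the Finsler category the covariant derivative of $\dot c$ depends on the reference vector, and the clean reduction $L''(0) = I_l(J^\perp, J^\perp)$ needed to invoke Lemma \ref{2013_02_13_lem2.3} hinges precisely on the mismatch between $D^{\dot c}_{\dot c}\dot c$ (vanishing because $c$ is a geodesic) and $D^{\dot\gamma}_{\dot c}\dot c$ (whose $g_{\dot\gamma}$-inner product with $\dot\gamma$ equals $-\cT_M(\dot\gamma(l), \dot c(0))$), which hypothesis $(3)$ kills.
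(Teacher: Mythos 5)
Your proof is correct and follows essentially the same route as the paper. Both arguments Taylor-expand $L$ and $\widetilde L$ at $s=0$, match the zeroth and first coefficients (via the first variation and \eqref{2013_02_14_eq2.1}--\eqref{2013_02_13_eq2.1}), use assumption (3) and the definition of $\mathcal{T}_M$ to reduce $L''(0)$ to $I_l(J^\perp,J^\perp)$, invoke Lemma \ref{2013_02_13_lem2.3} to get $\widetilde L''(0)-L''(0)\ge \delta C_1 g_{\dot\gamma(l)}(J^\perp(l),J^\perp(l))>0$, and absorb the higher-order error on a small interval. The only stylistic difference is in the last step: the paper writes explicit third-order remainder bounds $\mathcal{R}(s)\le C_2|s|^3$ and $\widetilde{\mathcal R}(s)\ge -C_3|s|^3$ with constants depending on $(M,l)$ and $(f,l)$, giving the concrete choice $\ve'=\min\{\ve,\,\delta C_1C_4/2(C_2+C_3)\}$ where $C_4=C_4(M,\theta)$ is a uniform lower bound for $g_{\dot\gamma(l)}(J^\perp(l),J^\perp(l))$ over $\omega\in[\theta,\pi-\theta]$; you instead appeal to continuity of $h''$ to find $\ve'$. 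Your version is fine in spirit, but note that to get the asserted dependence $\ve'=\ve'(M,l,f,\ve,\delta,\theta)$ and not on the particular segment $c$ or point $x$, mere continuity of $h''$ at a single geodesic is insufficient --- one needs the uniform cubic remainder bounds (equivalently a uniform bound on $h'''$ over the compact family of configurations), which is exactly what the paper's constants $C_2,C_3$ supply. Making that uniformity explicit would tighten your last paragraph to the paper's level of rigor.
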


\begin{proof}
We will state the outline of the proof, since the proof is very similar to 
\cite[Lemma 3.6]{KOT1} thanks to Lemma \ref{2013_02_13_lem2.3}. 
Set $\cR(s) := L(s) - \left\{ L(0) + L'(0)s + L''(0)s^{2}/2 \right\}$. 
Then, there exists $C_2:=C_2(M,l)>0$ such that 
\[
L(s) 
= 
L(0) + L'(0)s + \frac{1}{2} L''(0)s^{2} +\cR (s)
\le 
l+s \lambda \cos \omega + \frac{s^{2}}{2} I_{l} (J^{\perp}, J^{\perp}) +C_2 |s|^3.
\]
Note that $L'(0) = \lambda \cos \omega$ and $L''(0) = I_{l} (J^{\perp}, J^{\perp})$ 
hold by \cite[Lemma 3.3]{KOT1}, \eqref{2013_02_14_eq2.1}, 
\eqref{2013_02_13_eq2.1}, and the assumption (3) in this lemma. 
Similarly, 
\[ \wt{L}(s) \ge l+s \lambda \cos \omega + \frac{s^{2}}{2} 
\wt{I}_{l} (\wt{J}^{\perp}, \wt{J}^{\perp}) -C_3 |s|^{3} \]
for some $C_3:=C_3(f,l)>0$ and all $s \in (-\ve,\ve)$.
Since $g_{\dot{\gamma}(l)}(J^{\perp}(l), J^{\perp}(l)) > 0$ for all $\omega \in [\theta, \pi -\theta]$, 
there exists $C_4:= C_4(M, \theta) >0$ such that 
$g_{\dot{\gamma}(l)}(J^{\perp}(l), J^{\perp}(l)) > C_4> 0$. 
From Lemma \ref{2013_02_13_lem2.3}, 
$\wt{L}(s) - L(s) \ge s^2\{ \delta C_1 C_4 -2(C_2+C_3)s \} /2$ holds.
Therefore, we get $L(s) \le \wt{L}(s)$ for all $s \in [-\ve',\ve']$, 
if $\ve' := \min \left\{ \ve, \delta C_{1} C_4 / 2(C_2+C_3) \right\}$.
$\qedd$
\end{proof}

Thanks to Lemma \ref{2013_02_14_lem2.1} and the structure of $\Sph^2_\delta$, 
we may prove Lemma \ref{2013_02_13_lem2.1} by the same arguments in Sections $4$, $5$, 
and $6$ in \cite{KOT1}. $\qedd$

\begin{remark} Although we do not consider cases of $\omega=0$, or $\pi$ in Lemma \ref{2013_02_14_lem2.1}, Lemma \ref{2013_02_13_lem2.1} holds in cases of 
$\ora{\angle}x=\pi$, $\ola{\angle}y=0$, or $\ora{\angle}x=0$, $\ola{\angle}y=\pi$ 
because the reverse curve $\bar{c}$ of the geodesic segment $c$ is geodesic.
\end{remark}

\begin{flushleft}
K.~Kondo,\\
Department of Mathematics, Tokai University,\\
Hiratsuka City, Kanagawa Pref. 259-1292, Japan\\
{\small e-mail: {\tt keikondo@keyaki.cc.u-tokai.ac.jp}}
\end{flushleft}


\begin{thebibliography}{MMMM}


\bibitem[BCS]{BCS}
D.~Bao, S.-S.~Chern, and Z.~Shen, 
An introduction to Riemann-Finsler geometry, 
Springer, New York (2000). 

\bibitem[G]{G}
M.~Gromov, 
Curvature, diameter and Betti numbers,
Comment.\ Math.\ Helv.\ \textbf{56} (1981), 179--195.

\bibitem[GS]{GS}
K.~Grove and K.~Shiohama, A generalized sphere theorem,
Ann.\ of Math.\ (2) \textbf{106} (1977), 201--211.

\bibitem[ISS]{ISS}
J.~Itoh, S.V.~Sabau, and H.~Shimada, 
A Gauss-Bonnet Type Formula on Riemann-Finsler surfaces with non-constant indicatrix volume, 
Kyoto J. Math. \textbf{50} (2010), 165--192.

\bibitem[K]{K}
K.~Kondo, 
Radius sphere theorems for compact manifolds with radial curvature bounded below, 
Tokyo J. of Math, \textbf{30} (2007), 465--475.

\bibitem[KO1]{KO1}
K.~Kondo and S.~Ohta, 
Topology of complete manifolds with radial curvature bounded below, 
Geom.\ Funct.\ Anal.\ \textbf{17} (2007), 1237--1247.

\bibitem[KO2]{KO2}
K.~Kondo and S.~Ohta, 
Private communications in 2012.

\bibitem[KOT1]{KOT1}
K.~Kondo, S.~Ohta, and M.~Tanaka, 
A Toponogov type triangle comparison theorem in Finsler geometry,
Preprint (2012). Available at {\sf arXiv:1205.3913}

\bibitem[KOT2]{KOT2}
K.~Kondo, S.~Ohta, and M.~Tanaka, 
Topology of complete Finsler manifolds with radial flag curvature bounded below,
Preprint (2012). Available at {\sf arXiv:1210.1774}

\bibitem[KT]{KT1}
K.~Kondo and M.~Tanaka, 
Total curvatures of model surfaces control 
topology of complete open manifolds with radial curvature bounded below.~I, 
Math.\ Ann.\ \textbf{351} (2011), 251--266.

\bibitem[O1]{Ouni}
S.~Ohta, Uniform convexity and smoothness, and their applications in Finsler geometry, 
Math.\ Ann.\ {\bf 343} (2009), 669--699.

\bibitem[O2]{Ospl}
S.~Ohta, Splitting theorems for Finsler manifolds of nonnegative Ricci curvature,
Preprint (2012). Available at {\sf arXiv:1203.0079}

\bibitem[R]{Ra}
H.-B.~Rademacher, A sphere theorem for non-reversible Finsler metrics, 
Math.\ Ann.\ {\bf 328} (2004), 373--387.

\bibitem[S1]{Shvol}
Z.~Shen, Volume comparison and its applications in Riemann-Finsler geometry,
Adv.\ Math.\ {\bf 128} (1997), 306--328.

\bibitem[S2]{Sh}
Z.~Shen, Lectures on Finsler geometry, 
World scientific publishing co., Singapore, 2001.

\bibitem[SST]{SST}
K.~Shiohama, T.~Shioya, and M.~Tanaka, 
The geometry of total curvature on complete open surfaces,
Cambridge Tracks in Math.\ \textbf{159}, Cambridge University Press, Cambridge, 2003. 

\bibitem[TS]{TS}
M.~Tanaka and S.~V.~Sabau, 
The cut locus and distance function from a closed subset of a Finsler manifold, 
Preprint (2012). Available at {\sf arXiv:1207.0918}
\end{thebibliography}
\end{document}